\newcommand{\beql}[1]{\begin{equation}\label{#1}}
\newcommand{\eeql}{\end{equation}}
\newcommand{\eqn}[1]{(\ref{#1})}
\newcommand{\R}{\mathbb{R}}
\newcommand{\pr}{\mathbb{P}}
\newcommand{\E}{\mathbb{E}}
\newcommand{\ci}{{\cal I}}
\newcommand{\ck}{{\cal K}}
\newcommand{\cx}{{\cal X}}
\newcommand{\cm}{{\cal M}}
\newcommand{\bk}{\boldsymbol{k}}
\newcommand{\bx}{\boldsymbol{x}}
\newcommand{\bX}{\boldsymbol{X}}
\newcommand{\be}{\boldsymbol{e}}
\newcommand{\bw}{\boldsymbol{w}}
\newcommand{\bZero}{\boldsymbol{0}}
\newcommand{\veps}{\varepsilon}
\newtheorem{thm}{Theorem}
\newtheorem{lem}[thm]{Lemma}
\newtheorem{prop}[thm]{Proposition}
\newtheorem{definition}[thm]{Definition}
\theoremstyle{remark}
\newtheorem{remark}{Remark}
\begin{document}

\title{An infinite server system with packing constraints and ranked servers
}

\author
{
Alexander L. Stolyar \\
University of Illinois at Urbana-Champaign\\
Urbana, IL 61801, USA \\
\texttt{stolyar@illinois.edu}
}

\date{\today}

\maketitle

\begin{abstract}

A service system with multiple  types of customers, arriving as Poisson processes, is considered. 
The system has infinite number of servers, ranked by $1,2,3, \ldots$; a server rank is its ``location."
Each customer has an independent exponentially distributed service time, 
with the mean determined by its type. 
Multiple customers (possibly of different types) can be placed for service into 
one server, subject to ``packing'' constraints.
Service times of different customers are independent, even if served simultaneously
by the same server.   
The large-scale asymptotic regime is considered,
such that the mean number of customers $r$ goes to infinity.

We seek algorithms with the underlying objective of minimizing the location (rank) $U$ of the right-most (highest ranked) occupied (non-empty) server. Therefore, this objective seeks to minimize the total number $Q$ of occupied servers {\em and} keep the set of occupied servers as far at the ``left'' as possible, i.e., keep $U$ close to $Q$. In previous work, versions of 
{\em Greedy Random} (GRAND) algorithm have been shown to asymptotically minimize $Q/r$ as $r\to\infty$.
In this paper we show that when these algorithms are combined with the First-Fit rule for ``taking'' empty servers,
they asymptotically minimize $U/r$ as well.

\end{abstract}

{\bf Keywords:} Queueing networks, Stochastic bin packing, Packing constraints, Ranked servers, Greedy random (GRAND) algorithm, First-Fit, Local fluid limit, Cloud computing

{\bf AMS Subject Classification:} 90B15, 60K25


\section{Introduction}
\label{sec-intro}

\subsection{Motivation and main results (informally)}

We consider the following service system, whose primary motivation
is the problem of efficient real-time placement 
of virtual machines into physical machines in a cloud data center.
(A general discussion of this problem in data centers can be found in, e.g., \cite{Gulati2012}.)
There are multiple customer (virtual machine) types $i$, each arriving as Poisson process of rate $\Lambda_i$.
Customers are served by a countable number of servers (physical machines), ranked by $1,2,3, \ldots$; the ranks can be viewed as server ``locations.'' Each customer has to be placed for service immediately upon arrival, and has an independent exponential service time, with the mean $1/\mu_i$ determined by its type $i$. Multiple customers can be served simultaneously by the same server, subject to certain general {\em packing constraints.}  
A customer placement algorithm decides which server a new arriving customer is placed into, among those servers that may still accept
it without violating packing constraints. 
Customer ``migration'' -- i.e., moving from one server to another during service -- is not allowed.
The basic objective is to minimize in steady-state the location (rank) $U$ of the right-most (highest-ranked) occupied (i.e., non-empty) server. 
Therefore, we seek to minimize the total number $Q$ of occupied servers {\em and} keep the set of occupied servers as far at the ``left'' as possible, i.e., keep $U$ close to $Q$. 

The large-scale asymptotic regime is considered,
such that $\Lambda_i = \lambda_i r$, where $\lambda_i>0$ are constants and the scaling parameter $r$ goes to infinity. 
Without loss of generality it can be assumed that $\sum_i \lambda_i /\mu_i =1$, so that parameter $r$ is the mean number of customers in the system 
in steady state.

The same model, but without servers having ranks, was considered in \cite{StZh2013,StZh2015,St2015_grand-het}
(see also references therein). 
In particular, papers  \cite{StZh2013,StZh2015} introduce an analyze a very simple algorithm, called Greedy-Random (GRAND),
which does not have notion of server ranks/locations, and only tries to minimize the number of occupied servers $Q$.
This algorithm places an arriving customer uniformly at random into a set of servers, consisting of all currently occupied servers that can still 
fit it and a certain number $X_{\bZero}$ of empty servers, where $X_{\bZero}$ depends on the current system state; GRAND($X_{\bZero}$) refers to the algorithm instance with specific function $X_{\bZero}$. Papers \cite{StZh2013} and \cite{StZh2015} study algorithm instances GRAND($aZ$) and GRAND($Z^p$),
respectively, where $Z$ is the current total number of customers in the system, and $a>0$ and $0<p<1$ are fixed parameters.
A key feature of GRAND is that it is very parsimonious: it does {\em not} need to know customer types' arrival rates $\Lambda_i$ or service rates $\mu_i$; it does {\em not} need to a priori solve any optimization problem; at a time of a customer arrival, it only needs to know which servers can fit it  (and nothing else about the structure of packing constraints). 

It is proved in \cite{StZh2013,StZh2015} that GRAND($aZ$) and GRAND($Z^p$), 
are in fact asymptotically optimal in the sense of convergence in distribution 
\beql{eq-q-opt}
Q/r \Rightarrow q^* ~\mbox{as}~ r\to\infty,
\eeql
where constant $q^*$ is the lower bound under any algorithm, even allowing migration. 
(To be precise, $Q/r \Rightarrow q^*$ under GRAND($Z^p$); under GRAND($aZ$), $Q/r \Rightarrow q^{*,a}$, where $q^{*,a} \to q^*$
as $a\downarrow 0$.) Informally speaking, when $r$ is large, under these instances of GRAND the system dynamics is such that 
the state converges to the (vicinity of) the optimal state; and this occurs despite the parsimonious nature of the algorithm.

Thus, in terms of a data center application, GRAND($aZ$) and GRAND($Z^p$) allow one to
asymptotically minimize the total number of physical machines in use. However, those results do not answer the question of whether or not 
with high probability
all  occupied physical machines can be ``kept'' within one physical data center, consisting of $N$ physical machines, with $N > q^* r$,
say $N=(q^* + \delta)r$. The answer to this latter question is positive if, under some algorithm, not only 
\eqn{eq-q-opt} holds, 
but also the stronger property
\beql{eq-u-opt}
U/r \Rightarrow q^* ~\mbox{as}~ r\to\infty.
\eeql
Obviously, establishing property \eqn{eq-u-opt} is more involved, because the set of occupied servers, located between $1$ and $U$, 
is ``fragmented'' in a complicated random way by empty servers. Analysis of the asymptotic behavior of $U$ is challenging even for the
M/M/$\infty$ queueing system with ranked servers, 
which is a special case of our model, such that there is only one customer type and each server can serve only one customer at a time.
(See Section~\ref{sec-rel-prev-work}.) Such analysis is even more challenging for the far more general model in this paper,
where server occupancy times (from an empty server being taken to becoming empty again) are no longer independent. 

In this paper we consider the placement algorithms which combine GRAND($aZ$) and GRAND($Z^p$) with the  
{\em First-Fit} (FF) rule for allocation 
of empty servers. Under FF rule, every time an algorithm decides to place a customer into an empty server, the left-most empty server is chosen.
We show that these combinations, 
labeled GRAND($aZ$)-FF and GRAND($Z^p$)-FF, are asymptotically optimal in that  \eqn{eq-u-opt} holds.

In fact, our results are more generic -- we separate the problem of keeping $U/r$ close to $Q/r$ from the problem of minimizing $Q/r$.
Our main contributions are as follows:
\begin{itemize}
\item[(i)] Suppose there is an algorithm ALG, oblivious of server ranks, under which the number of occupied servers $Q/r \Rightarrow \hat q$, 
for some constant $\hat q \ge q^*$. Let ALG-FF label the combination of ALG with the FF rule for taking empty servers.
We  give sufficient conditions (Theorem~\ref{th-reduction-uni}) for ALG, under which the ALG-FF will keep $U/r$ close to $Q/r$. 
The conditions are such that, roughly speaking, in steady-state, over a long enough time $\beta(r)$, with high probability: $Q/r$ stays close to $\hat q $; each occupied server empties at the rate at least $\alpha(r)$; and $r e^{-\alpha(r) \beta(r)} \to 0$ as $r \to \infty$. 

\item[(ii)] We verify sufficient conditions (i) for algorithms GRAND($aZ$) and GRAND($Z^p$),
 thus proving the asymptotic optimality of GRAND($aZ$)-FF and GRAND($Z^p$)-FF, respectively. 
Specifically, we show
\beql{eq-intro-zp}
|U-q^* r|/r \Rightarrow 0 ~~\mbox{under GRAND($Z^p$)-FF ~~(Theorem~\ref{th-grand-zp-ff})},
\eeql
and 
\beql{eq-intro-az}
|U-q^{*,a} r|/r^{1/2+\veps} \Rightarrow 0, ~\forall \veps>0, ~~\mbox{under GRAND($aZ$)-FF ~~ (Theorem~\ref{th-grand-fluid})},
\eeql
where (by the results of \cite{StZh2013}) $q^{*,a}$ depends on the algorithm parameter $a>0$ and is such that $Q/r \Rightarrow q^{*,a}$ as $r\to\infty$, and $q^{*,a} \to q^*$ as $a\downarrow 0$.
\end{itemize}

\begin{remark}
\label{rem-stronger-weaker}
Using the fact that the total number of occupied servers $Q$ is upper bounded by the total number of customers,
which is Poisson with mean $r$, it is easy to see that \eqn{eq-intro-zp} 
and \eqn{eq-intro-az} 
imply $\E Q/r \to q^*$  and $\E Q/r \to q^{*,a}$, respectively. Then, 
\eqn{eq-intro-zp} and \eqn{eq-intro-az} imply the following weaker asymptotic optimality properties:
\beql{eq-intro-zp-weaker}
\E [q^* r-G(q^* r)]/r \to 0 ~\mbox{and}~ \E [Q-G(q^* r)]/r \to 0 ~~~\mbox{under GRAND($Z^p$)-FF},
\eeql
\beql{eq-intro-az-weaker}
\E [q^{*,a} r-G(q^{*,a} r)]/r \to 0 ~\mbox{and}~  \E [Q-G(q^{*,a} r)]/r \to 0  ~~~\mbox{under GRAND($aZ$)-FF},
\eeql
where $G(N)$ is the number of occupied servers with ranks at most $N$. 
Property \eqn{eq-intro-zp-weaker}
[respectively, \eqn{eq-intro-az-weaker}] states that there are only $o(r)$ empty servers to the left of $q^* r$ [respectively, $q^{*,a} r$]
and only $o(r)$ occupied servers to the right of $q^* r$ [respectively, $q^{*,a} r$].
\end{remark}

\begin{remark}
\label{rem-gen2}
In paper \cite{St2015_grand-het} the GRAND($aZ$) results of \cite{StZh2013} are generalized to a heterogeneous system, where servers can be of multiple types, with the packing constraints depending on the server type. Our Theorem~\ref{th-grand-fluid} for GRAND($aZ$)-FF and its proof generalize to the model in \cite{St2015_grand-het} in a fairly straightforward fashion -- we do not do it in this paper to simplify the exposition. 
\end{remark}

\subsection{Previous work}
\label{sec-rel-prev-work}

We already discussed that this paper extends and complements the results of \cite{StZh2013,StZh2015} on the 
GRAND($aZ$) and GRAND($Z^p$) algorithms. Paper \cite{St2015_grand-het} generalizes the GRAND($aZ$) results of \cite{StZh2013} to a heterogeneous service system where servers can be of multiple types, with the packing constraints depending on the server type. (We already remarked that our Theorem~\ref{th-grand-fluid} on GRAND($aZ$)-FF generalizes to the model in \cite{St2015_grand-het}.) Paper \cite{St2015_grand-het} also considers a different variant of the model, where
there is a finite set of servers, and arriving customers may be blocked. 
(Thus, this is another way to model a data center with finite number of physical machines.)
For this variant of the model, \cite{St2015_grand-het} assumes strictly subcritical case, 
with number of servers $N=(1+\delta) q^* r$, $\delta>0$,
and considers the algorithm (which can be viewed as another version of GRAND), simply assigning an arriving customer to any server available to it, uniformly at random, and blocks the customer if none is available.
The paper proves local stability of fluid limits at the unique equilibrium point -- this strongly suggests (but does not prove) that the steady-state blocking probability vanishes as $r\to\infty$.

There has been a significant amount of work (see \cite{Kosten37,Newell-book,CKS86,Aldous, Preater97, K00, SK08} and references therein)
on the M/M/$\infty$ queueing system with ranked servers. 
This system is a special case of our model, such that there is only one customer type and the packing constraints are trivial -- each server can serve exactly one customer. In particular, here obviously $q^* = 1$.   
A placement algorithm only needs to pick an empty 
server for an arriving customer, 
and \cite{Kosten37,Newell-book,CKS86,Aldous, Preater97,K00, SK08} study the FF algorithm.
The exact distribution of $U$ was found in \cite{Kosten37,CKS86,Preater97}, but in the form of an infinite sum, which is not easy to analyze. 
Paper \cite{CKS86} proves asymptotic optimality of FF in the form
\beql{eq-coffman-asymp}
\mbox{$\E U- r \le c (r \log r)^{1/2}$, for some constant $c$, ~~for large $r$.}
\eeql
Papers \cite{Aldous,K00,SK08} provide a variety of asymptotic results, in particular they derive the asymptotics 
$U - r \sim (2 r \log \log r)^{1/2}$, 
which may be considered a refinement of \eqn{eq-coffman-asymp}, except it is in terms
of convergence in distribution (like \eqn{eq-intro-zp} and \eqn{eq-intro-az} are), not in the stronger sense of 
expectation bound as in \eqn{eq-coffman-asymp}.

There is also a line of work (see \cite{CL89,ErSt2024} and references therein) on the extension of the M/M/$\infty$ with ranked servers model
in the following direction: there are multiple customer types $i$ having different ``sizes'' $s_i$; a type-$i$ customer needs to be placed on a contiguous set of $s_i$ empty servers. The difficulty here is that the left-most empty server cannot always be taken, and this is a major additional source of ``fragmentation'' 
of the set of occupied servers. In this model FF algorithm places an arriving type-$i$ customer into the left-most size-$s_i$ contiguous set of empty servers; and there is {\em no} simple reason why FF would be optimal in the sense of minimizing $U$.
Paper \cite{CL89}, in particular, provides a universal lower bound on $U$ under any placement algorithm.
Recent paper \cite{ErSt2024}, 
for the case of two customer types, with sizes 1 and 2, proves the 
asymptotic optimality of FF in the (weaker) form \eqn{eq-intro-zp-weaker}.

\subsection{Basic notation used throughout the paper}
\label{subsec-notation}

Sets of real and real non-negative numbers are denoted by $\R$ and $\R_+$, respectively.
 We use bold and plain letters  for vectors and scalars, respectively.
The standard Euclidean norm of a vector $\bx\in \R^n$ is denoted by $\|\bx\|$. 
Convergence $\bx \to \bw \in \R^n$ means ordinary convergence in $\R^n$,
while $\bx \to W \subseteq \R^n$ means convergence to a set, namely,
$\inf_{\bw\in W} \|\bx-\bw\|\to 0$.
The $i$-th coordinate unit vector in $\R^n$ is denoted by $\be_i$.
We denote by $\nabla F(\bx)$ and  $\nabla^2 F(\bx)$ the gradient and the Hessian of a function $F(\bx), ~\bx \in \R^n$.

Symbol $\implies$
denotes convergence in distribution of random variables taking values in space $\R^n$
equipped with the Borel $\sigma$-algebra. The
abbreviation {\em w.p.1} means {\em with probability 1}.
We often write $x(\cdot)$ to mean the function (or random process) $\{x(t),~t\ge 0\}$.
Random element $x(\infty)$ denotes the value of the process $x(t)$, when it is in stationary regime;
in other words, the distribution of $x(\infty)$ is the stationary distribution of the process.

Notation $\lceil \zeta \rceil$ means the smallest integer
greater than or equal to $\zeta$, and $\lfloor \zeta \rfloor$ means the largest integer 
smaller than or equal to $\zeta$; $\zeta \wedge \eta = \min(\zeta,\eta)$, $\zeta \vee \eta = \max(\zeta,\eta)$.
We will use notation $C_g$ for a generic positive constant, its value may be different in different expressions.
For a set $\ci$, $|\ci|$ is its cardinality.
Abbreviation WLOG means {\em without loss of generality}.

\subsection{Layout of the rest of the paper}

The formal model is described in Section~\ref{sec-model}, while Section~\ref{sec-results-infinite} defines the algorithms and
presents our main results (Theorems~\ref{th-grand-fluid} and \ref{th-grand-zp-ff}), along with the necessary background and notation. 
Section~\ref{sec-reduction-uni} gives a generic result (Theorem~\ref{th-reduction-uni}) on the
combination of any algorithm, oblivious of server ranks, with First-Fit rule for choosing empty servers.
Sections~\ref{sec-fsp-dynamics} and \ref{sec-zp-ff-proof} contain the proofs of Theorems~\ref{th-grand-fluid} and \ref{th-grand-zp-ff},
respectively. Some conclusions and discussion are given in Section~\ref{sec-discussion}.

\section{Model}
\label{sec-model}

We consider a service system with $I$  types 
of customers, indexed by $i \in \{1,2,\ldots,I\} \equiv \ci$. 
The service time of a type-$i$ customer is an exponentially distributed random variable with mean $1/\mu_i$.
All  customers' service times are mutually independent.
There is an infinite, countable ``supply'' of servers.
A server can potentially serve more than one customer simultaneously, subject to the following very general packing constraints. We say that a vector $\bk = (k_1,\ldots,k_I)$ with non-negative integer
$k_i, ~i\in \ci,$ is a server {\em configuration}, if a server can simultaneously serve a combination of customers of different types given by the values $k_i$. 
There is a finite set of all allowed server configurations, denoted by $\bar\ck$.
We assume that $\bar\ck$ satisfies a natural {\em monotonicity} condition: if 
$\bk\in \bar\ck$, then all ``smaller'' configurations $\bk'= (k'_1,\ldots,k'_I)$, i.e. such that $k'_i \le k_i$ for all $i$, belong to $\bar\ck$ as well. Without loss of generality, assume that for each $i$, $\be_i \in \bar\ck$, where $\be_i$ is the $i$-th coordinate unit vector (otherwise, type-$i$ customers cannot be served at all).
By convention, vector $\bZero \in \bar\ck$, where
$\bk=\bZero$ is the $I$-dimensional component-wise zero vector -- this is the configuration of an empty 
server. We denote by $\ck=\bar\ck \setminus \{\bZero\}$ 
the set of server configurations {\em not} including the empty (or, zero) configuration. 

An important feature of the model is that simultaneous service does {\em not} affect the service time
distributions of individual customers. In other words, the service time of a customer is unaffected by whether or not there are other customers served simultaneously by the same server. A customer can be ``added'' to an empty or occupied server, as long as the packing constraints are not violated. Namely, a type $i$ customer can be added to a server whose current configuration $\bk\in\bar\ck$  is such that $\bk+\be_i \in \ck$. When the service of a type-$i$ customer by a  server in configuration $\bk$ is completed,
the customer leaves the system and the server's configuration changes to $\bk-\be_i$.

Customers of type $i$ arrive as an independent Poisson process of rate $\Lambda_i >0$;
these arrival processes are independent of each other and of the customer service times.
Each arriving customer is immediately placed for service in one of the servers, as long as packing 
constraints are not violated; after that the customer stays in that server until its service is completed -- there is {\em no customer ``migration''} during the service.

While the server capabilities are identical, they have unique ranks $\ell=1,2, \ldots$, which can be
viewed as server ``locations;'' accordingly, we will say that server $\ell$ is located to the left (resp., right) of server $m$, if $\ell \le m$ (resp. $\ell > m$).

\section{Main results}
\label{sec-results-infinite}

In this section we formally define the proposed placement algorithms, the asymptotic regime, and state 
our main 
results.

Denote by $\bar X_{\bk}(t;s)$ the number of servers in configuration $\bk\in \ck$, located to the left of $s\ge 0$ (i.e. with ranks $\ell \le s$) at time $t$. Clearly, $\bar X_{\bk}(t;s)$ is piece-wise constant, right-continuous, non-decreasing in $s$, with $\bar X_{\bk}(t;0)=0$, and 
$\bar X_{\bk}(t;\infty)\doteq \lim_{s\uparrow \infty} \bar X_{\bk}(t;s)$ being the total number of servers in configuration $\bk\in \ck$.
The system state at time $t$ is then the set of functions $\bar \bX(t) = \{\bar X_{\bk}(t;\cdot), ~\bk\in \ck\}$. 

A {\em placement algorithm} determines which server an arriving customer is placed to, as a function of the current system state $\bar \bX(t)$. Under any well-defined placement algorithm, 
the process $\{\bar \bX(t), t\ge 0\}$ is a continuous-time Markov chain 
with a countable state space. It is easily seen to be irreducible and positive recurrent. Indeed, 
 the total number $Y_i(t)$ of type-$i$ customers in the system is independent from the  placement algorithm and
is a Markov chain corresponding to an $M/M/\infty$ system; moreover, these Markov chains are independent across $i$;
therefore, the Markov chain $(Y_i(t), i\in \ci), t\ge 0,$ is positive recurrent; therefore,
empty state (with all $Y_i=0$) is reached from any other, and the expected time to return to it is finite.
(Note that the stationary distribution of $Y_i(\cdot)$ is 
Poisson with mean $\Lambda_i/\mu_i$; we denote by $Y_i(\infty)$ the random value of $Y_i(t)$ in steady-state -- it is, therefore, a Poisson random 
variable with mean $\Lambda_i/\mu_i$.)
Consequently, the process $\{\bar \bX(t), ~t\ge 0\}$ has a unique stationary distribution; let $\bar \bX(\infty)$
be the random system state $\bar \bX(t)$ in stationary regime.

Let us denote by $U(t)$ the location of the right-most occupied (non-empty) server at time $t$, that is 
$$
U(t) = \min\{s ~| \sum_{\bk\in \ck } \bar X_{\bk}(t;s) = \sum_{\bk\in \ck } \bar X_{\bk}(t;\infty) \}.
$$
We are interested in finding a placement algorithm that minimizes $U(\infty)$ in the stationary regime. Informally speaking, we seek an algorithm 
which, first, optimally ``packs'' customers into servers so as to minimize the total number of occupied servers and, in addition, keeps the occupied servers ``packed'' as much as possible ``on the left'' so as to minimize $U(t)$. 

We will use notation $X_{\bk}(t)\doteq \bar X_{\bk}(t;\infty)$ for the total number of servers in configuration $\bk\in \ck$ at time $t$,
and notation $\bX(t) = \{X_{\bk}(t), ~\bk\in \ck\}$ for the projection of $\bar \bX(t)$, containing only the information about the configurations of occupied servers (without information about their ranks/locations). 

We now define two algorithms, labeled GRAND($aZ$)-FF and GRAND($aZ$)-FF, which are, respectively, the GRAND($aZ$) \cite{StZh2013}
and GRAND($Z^p$) \cite{StZh2015} algorithms (for the model oblivious of server ranking), augmented for the model in this paper (with server ranking) by the First-Fit (FF) rule for ``taking'' empty servers.

\begin{definition}[GRAND($a Z$)-FF algorithm]
\label{df:grand}
(i) Rank-oblivious part 
(GRAND($a Z$)). The algorithm has a single parameter  $a>0$.
Let $Z(t)=\sum_i \sum_{\bk} k_i X_{\bk}(t)$ denote the total number of customers in the system at time $t$.
Denote $X_{\bZero}(t) \doteq \lceil a Z(t) \rceil$, 
$$
X_{(i),\diamond}(t) \doteq  \sum_{\bk\in \ck:~\bk+\be_i\in \ck} X_{\bk}(t),
$$
and $X_{(i)}(t) \doteq  X_{\bZero}(t) + X_{(i),\diamond}(t)$. ($X_{(i),\diamond}(t)$ is the number of occupied servers,
available to new type-$i$ customers.) 
If $X_{(i)}(t) \ge 1$, a new customer of type $i$, arriving at time $t$, is placed into an empty server with probability 
$X_{\bZero}(t)/X_{(i)}(t)$, and with probability 
$X_{(i),\diamond}(t)/X_{(i)}(t)$ it is placed uniformly at random into one of the $X_{(i),\diamond}(t)$ occupied servers available to type $i$.
If $X_{(i)}(t)=0$,  
an arriving customer is  
placed into an empty server.
\\ (ii) Empty server selection part (FF). If the algorithm in part (i) chooses to place a customer into an empty server, the lowest-ranked (left-most) empty server is taken.
\end{definition}

\begin{definition}[GRAND($Z^p$)-FF algorithm]
\label{df:grand-zp}
The algorithm has a single parameter  $0<p<1$.
This algorithm is defined exactly as GRAND($aZ$)-FF, except $X_{\bZero}(t) \doteq \lceil (Z(t))^p \rceil$.
(GRAND($Z^p$)-FF can be interpreted as GRAND($aZ$)-FF with $a$ being not a fixed parameter, but rather the function $a=Z^{p-1}$ of the system current state.)
\end{definition}

Clearly, under GRAND($a Z$)-FF [resp., GRAND($Z^p$)-FF] algorithm the process $\bX(t)$, which is a projection of $\bar \bX(t)$,
is itself a positive recurrent Markov chain, exactly same as the Markov chain under GRAND($a Z$) [resp., GRAND($Z^p$)],
studied in \cite{StZh2013} [resp., \cite{StZh2015}].

From this point on, we will work with different projections $\bX(t)$, $U(t)$, $Y_i(t)$,  etc., of the process $\bar \bX(t)$, without using explicit notation for the latter. It should be clear that, for example: $(\bX(t),U(t))$ is a projection of $\bar \bX(t)$; $(\bX(\infty),U(\infty))$ is the random value of $(\bX(t),U(t))$ in steady-state, with the joint distribution being a projection of the distribution of $\bar \bX(\infty)$; etc.

We now define the asymptotic regime. Consider a sequence  $r\to\infty$ of positive scaling parameters.
Customer arrival rates scale linearly with $r$: $\Lambda_i = \lambda_i r$, where $\lambda_i$ are fixed positive parameters.
For the process with a given value of $r$, all variables/quantities will have the superscript $r$.
Specifically, at time $t$: $\bX^r(t)$ is the state of occupied servers (without regard of their ranking),
$U^r(t)$ the location of the right-most occupied server, 
$Y^r_i(t) \equiv \sum_{\bk\in\ck} k_i X^r_{\bk}(t)$ the total number
of customers of type $i$, $Z^r(t) \equiv \sum_i Y^r_i(t)$ is the total number
of all customers, $Q^r(t) \equiv \sum_{\bk\in\ck} X_{\bk}^r(t)$ is the total number of occupied servers.
(Note that $X_{\bZero}^r(t)$ is {\em not} a component of $\bX^r(t)$.)
The steady-state value of a random element, for example, $(\bX^r(t),U^r(t))$ is denoted $(\bX^r(\infty),U^r(\infty))$.

Since arriving customers are placed for service immediately
and their service times are independent of each other and of the rest of the system, 
 $Y^r_i(\infty)$ is a Poisson random variable with mean $\rho_i r$, where $\rho_i\doteq \lambda_i/\mu_i$.
Moreover, $Y^r_i(\infty)$ are independent across $i$.
We have a trivial upper bound,
$Q^r(\infty)  \le Z^r(\infty) = \sum_i Y^r_i(\infty)$ on the total number of occupied servers, 
where $Z^r(\infty)$ has Poisson distribution with mean $r \sum_i \rho_i$.
From now on, WLOG, we assume $\sum_i \rho_i=1$; this is equivalent to rechoosing the parameter $r$ to be $r \sum_i \rho_i$.

We now define, for each $r$, the  {\em fluid-scaled} process. For any $t$ and $\bk \in \bar \ck$, define
$$
x_{\bk}^r(t) \doteq X^r_{\bk}(t)/r,
$$
 and denote  
 $\bx^r(t) = \{x^r_{\bk}(t), ~\bk\in \ck\}$. 
 For any $r$, $\bx^r(t)$ takes values in the non-negative orthant $\R_+^{|\ck|}$,
  Similarly, $y^r_i(t)=Y^r_i(t)/r$, $z^r(t)=Z^r(t)/r$, 
$x^r_{(i)}(t)=X^r_{(i)}(t)/r$, $q^r(t)=Q^r(t)/r$, and $u^r(t)=U^r(t)/r$.
Steady-state values are denoted $\bx^r(\infty), u^r(\infty)$, etc.
 
Since $q^r(\infty)=\sum_{\bk\in \ck}  x_{\bk}^r(\infty) \le z^r(\infty)=Z^r(\infty)/r$,
we see that the random variables
$q^r(\infty)$ are uniformly integrable in $r$.
This in particular implies that the sequence of distributions of 
$(\bx^r(\infty), u^r(\infty))$ 
is tight in the space $\R_+^{|\ck|} \times [\R_+ \cup \{\infty\}]$,
and therefore there always exists a limit $(\bx(\infty), u(\infty))$  
in distribution, so that 
$(\bx^r(\infty), u^r(\infty)) \implies (\bx(\infty), u(\infty))$ 
along a subsequence of $r$. Here, while $(\bx^r(\infty), u^r(\infty)) \in \R_+^{|\ck|} \times \R_+$, w.p.1,
and we must have $\bx(\infty) \in \R_+^{|\ck|}$, w.p.1, at this point it is not clear that 
$u(\infty) \in \R_+$, w.p.1, and therefore to claim convergence we need to use the one-point compactification 
$\R_+ \cup \{\infty\}$ of the space for $u^r(\infty)$.

The limit (random) vector $\bx(\infty)$ satisfies the following conservation laws:
\beql{eq-cons-laws}
\sum_{\bk\in\ck} k_i x_{\bk}(\infty) \equiv y_i(\infty) = \rho_i, ~~\forall i,
\end{equation}
and, in particular, 
\beql{eq-cons-laws2}
z(\infty)\equiv \sum_i y_i(\infty) \equiv \sum_i \rho_i
= 1.
\end{equation}
Therefore, the values of $\bx(\infty)$ are confined to the convex compact 
$(|\ck|-I)$-dimensional 
polyhedron
$$
\cx \equiv \{\bx\in \R_+^{|\ck|} ~|~ \sum_{\bk\in \ck} k_i x_{\bk} = \rho_i, ~\forall i\in\ci \}.
$$
We will slightly abuse notation by using symbol $\bx$ (and later $\tilde \bx$) for a generic element of $\R^{|\ck|}$; 
while $\bx(\infty)$ and $\bx(t)$ (and later $\tilde \bx(t)$), refer to random  elements taking values in $\R^{|\ck|}$. 

Also note that under GRAND($aZ$), $x^r_{\bZero}(\infty) \implies x_{\bZero}(\infty)=a z(\infty)=a$, as $r\rightarrow \infty$.
Similarly, under GRAND($Z^p$), $x^r_{\bZero}(\infty) r^{1-p} \implies 1$, as $r\rightarrow \infty$.

The asymptotic regime and the associated basic properties \eqn{eq-cons-laws}
and \eqn{eq-cons-laws2} hold {\em for any placement algorithm}. 
 Indeed, \eqn{eq-cons-laws}
and \eqn{eq-cons-laws2} only depend on the already mentioned  fact 
that all $Y_i^r(\infty)$ are mutually independent
Poisson  random variables
with means $\rho_i r$.

Consider the following problem of minimizing the number of occupied servers, 
$\min_{\bx \in \cx}
\sum_{\bk\in\ck} x_{\bk}$,
on the fluid scale; it is a
linear program.
Denote by $\cx^* \subseteq \cx$ the set of its 
optimal solutions,
and by $q^*$ its optimal value.

For a fixed $a>0$, define the following function
\beql{eq-L-def}
L^{(a)}(\bx) = [-\log a]^{-1}\sum_{\bk\in\ck} x_{\bk} \log [x_{\bk} c_{\bk} /(e a)],
\end{equation}
where $c_{\bk} \doteq \prod_i k_i !$, $0!=1$. 
The function $L^{(a)}(\bx)$ is strictly convex in $\bx\in\R_+^{|\ck|}$.
Consider the problem $\min_{\bx\in \cx} L^{(a)}(\bx)$; it is a
 convex optimization problem.
Denote by $\bx^{*,a} \in \cx$ its unique optimal solution, and by $q^{*,a} = \sum_{\bk \in \ck} x_{\bk}^{*,a}$ the corresponding total (fluid-scaled) number of servers; also denote $x^{*,a}_{\bZero}=a$.

Since $Y_i^r(\infty)$  has Poisson distribution with mean $\rho_i r$, we have that, for any $\veps>0$, 
\beql{eq-reduction-zp111-spec}
\pr\left\{\max_i \left| Y^r_i(\infty) - \rho_i r \right|
\le r^{1/2+\veps}  \right\} \to 1.  
\eeql
This yields the following high probability lower bound on $Q^r(\infty)$, and then on $U^r(\infty)$, under {\em any} 
placement algorithm: for any $\veps>0$ 
\beql{eq-lower-bound-uni}
0 \wedge (Q^r(\infty)-r q^{*})/r^{1/2+\veps} \Rightarrow 0, ~~\mbox{and then} ~~0 \wedge (U^r(\infty)-r q^{*})/r^{1/2+\veps} \Rightarrow 0.
\eeql
Later we will need a stronger form of \eqn{eq-reduction-zp111-spec}, obtained in lemma 5 in \cite{StZh2015}: 
when the process is in steady-state,
 the following condition holds for any $\veps >0$ and $\nu>0$: 
\beql{eq-reduction-zp111}
\pr\left\{\max_i \left| Y^r_i(t) - \rho_i r \right|
\le r^{1/2+\veps}, ~~\forall t \in [0,r^\nu] \right\} \to 1. 
\eeql

\begin{prop}[From theorems 3 and 4 in \cite{StZh2013}]
\label{th-grand-fluid-convergence} 
(i) For a fixed $a>0$, consider a sequence of systems under the GRAND($a Z$) algorithm, indexed by
$r\to\infty$. Then, $\bx^r(\infty) \Rightarrow \bx^{*,a}$; in particular, $q^r(\infty) \Rightarrow q^{*,a}$.

(ii) As $a\downarrow 0$, $\bx^{*,a} \to \cx^*$;  in particular, $q^{*,a} \to q^{*}$.
\end{prop}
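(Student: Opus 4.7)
For part (i), the plan is a standard fluid-limit Lyapunov analysis. First I would write down the transition structure of the Markov chain $\bX^r(\cdot)$ under GRAND($aZ$): type-$i$ arrivals occur at rate $\lambda_i r$; upon arrival, with probability $X^r_{\bZero}/X^r_{(i)}$ an empty server is converted to configuration $\be_i$, and for each $\bk$ with $\bk+\be_i\in\ck$ a server is promoted from $\bk$ to $\bk+\be_i$ with probability $X^r_{\bk}/X^r_{(i)}$; type-$i$ departures from configuration-$\bk$ servers occur at rate $k_i\mu_i X^r_{\bk}$. Rescaling by $r$, and using $x^r_{\bZero}(t)\to a$ along the trajectory (since $z^r(t)\to 1$), I would obtain a fluid ODE for $\bx(t)\in\cx$.

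Second, I would verify that $L^{(a)}$ is a strict Lyapunov function for this ODE. Its gradient
\[
\partial L^{(a)}/\partial x_{\bk}=[-\log a]^{-1}\log(x_{\bk}c_{\bk}/a)
\]
satisfies the KKT conditions for $\min_{\cx}L^{(a)}$ at $\bx^{*,a}$. Using the product-form identity $c_{\bk+\be_i}=(k_i+1)c_{\bk}$, the arrival and departure terms in $(d/dt)L^{(a)}(\bx(t))$ pair up along each edge $\bk\to\bk+\be_i$ into expressions of the form $-(b\log(b/c)-b+c)\le 0$, with equality iff $\bx(t)=\bx^{*,a}$. Combined with strict convexity of $L^{(a)}$, this pins the unique equilibrium of the ODE at $\bx^{*,a}$.

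Third, I would finish by a tightness-and-invariance argument: since $q^r(\infty)\le z^r(\infty)$ and the latter is Poisson with mean $r$, the family $q^r(\infty)/1$ is uniformly integrable and $\{\bx^r(\infty)\}$ is tight in $\cx$; the distribution of any subsequential limit is an invariant measure for the fluid dynamics and so must be supported on the invariant set, which by the strict Lyapunov property is $\{\bx^{*,a}\}$. This gives $\bx^r(\infty)\implies\bx^{*,a}$, hence $q^r(\infty)\implies q^{*,a}$. The main obstacle is the explicit algebraic verification of the Lyapunov inequality --- pairing the arrival and departure contributions via the $c_{\bk}$ product-form factors requires careful bookkeeping, but once matched, nonpositivity follows from the elementary inequality $b\log(b/c)-b+c\ge 0$.

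For part (ii), I would rewrite
\[
L^{(a)}(\bx)=\sum_{\bk\in\ck}x_{\bk}+[-\log a]^{-1}\sum_{\bk\in\ck}x_{\bk}\bigl[\log(x_{\bk}c_{\bk})-1\bigr],
\]
exhibiting $L^{(a)}$ as the linear objective $\sum_{\bk}x_{\bk}$ plus an entropy-like perturbation with vanishing coefficient $[-\log a]^{-1}\to 0$ as $a\downarrow 0$, while the perturbation itself is uniformly bounded on the compact polytope $\cx$. Compactness gives precompactness of $\{\bx^{*,a}\}_{a\downarrow 0}$; for any limit point $\bx^0$ along $a_n\downarrow 0$ and any $\bx'\in\cx^*$, the optimality inequality $L^{(a_n)}(\bx^{*,a_n})\le L^{(a_n)}(\bx')$ passes to the limit to yield $\sum_{\bk}x^0_{\bk}\le \sum_{\bk}x'_{\bk}=q^*$, forcing $\bx^0\in\cx^*$. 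Hence $\bx^{*,a}\to\cx^*$ and in particular $q^{*,a}\to q^*$.
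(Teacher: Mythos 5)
Your proposal is correct and follows essentially the same route as the source this proposition is quoted from: the paper itself gives no proof (it imports the statement from theorems 3 and 4 of \cite{StZh2013}), and that reference proceeds exactly as you do --- fluid limits of the GRAND($aZ$) dynamics, the function $L^{(a)}$ as a strict Lyapunov function whose minimizer on $\cx$ is the unique fluid equilibrium $\bx^{*,a}$, a tightness-plus-invariance (interchange-of-limits) argument for the stationary distributions, and a vanishing-perturbation argument for $\bx^{*,a}\to\cx^*$ as $a\downarrow 0$. This is consistent with the present paper's own later use of \cite{StZh2013} (fluid-limit results in its section~4 and $L^{(a)}$ as Lyapunov function in its lemma~7), so no further comparison is needed.
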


Our main result for GRAND($aZ$)-FF algorithm is

\begin{thm}
\label{th-grand-fluid}
For a fixed $a>0$, consider a sequence of systems under the GRAND($a Z$)-FF algorithm, indexed by
$r\to\infty$. Then, for any $\veps>0$,  
$$
|U^r(\infty)- q^{*,a} r|/r^{1/2+\veps} \Rightarrow 0. 
$$
\end{thm}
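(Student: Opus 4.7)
The overall strategy is to apply the generic reduction Theorem~\ref{th-reduction-uni} with ALG set to GRAND($aZ$). Proposition~\ref{th-grand-fluid-convergence}(i) already supplies the convergence $Q^r(\infty)/r \Rightarrow q^{*,a}$, so the role of $\hat q$ is played by $q^{*,a}$. The task then reduces to verifying, for GRAND($aZ$), the two sufficient conditions of Theorem~\ref{th-reduction-uni}: a stability condition that $Q^r(t)/r$ stays close to $q^{*,a}$ on a time window $[0,\beta(r)]$ when initialized at stationarity, and a minimum per-server ``emptying rate'' $\alpha(r)$ satisfying $r e^{-\alpha(r)\beta(r)} \to 0$.

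For the stability condition, I plan to combine three ingredients. First, Proposition~\ref{th-grand-fluid-convergence}(i) gives $\bx^r(\infty) \Rightarrow \bx^{*,a}$, so that, started from stationarity, $\bx^r(0)$ concentrates at $\bx^{*,a}$. Second, every fluid-limit trajectory of GRAND($aZ$) with initial condition $\bx^{*,a}$ is the constant path equal to $\bx^{*,a}$, which is the unique globally stable equilibrium of the fluid dynamics (cf.\ \cite{StZh2013}). Third, the uniform Poisson concentration estimate~\eqref{eq-reduction-zp111} holds on arbitrarily long polynomial windows $[0, r^\nu]$. The standard fluid-sample-path argument, developed in Section~\ref{sec-fsp-dynamics}, then yields that for every $\delta>0$ and $\nu>0$, with probability tending to $1$, $\sup_{t\in[0,r^\nu]} \lVert \bx^r(t) - \bx^{*,a}\rVert \le \delta$. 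In particular $\sup_{t\in[0,\beta(r)]} |q^r(t)-q^{*,a}| \le \delta$ with high probability, for any $\beta(r)$ of at most polynomial growth.

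For the emptying rate, the key observation is that $\bar\ck$ is finite, so every occupied server carries at most $k_{\max} := \max_{\bk\in\bar\ck}\sum_i k_i$ customers, each departing independently at rate at least $\mu_{\min} := \min_i \mu_i$. Meanwhile, under GRAND($aZ$), a designated occupied server receives type-$i$ customers at rate $\lambda_i r / X_{(i)}(t) \le \lambda_i r / X_{\bZero}(t)$; on the stability event of the previous paragraph, $X_{\bZero}(t) = \lceil a Z^r(t)\rceil = \Theta(r)$, so the per-server arrival rate is bounded by some constant $\bar\lambda$ depending only on $a$ and the parameters, uniformly in $r$ and in the server's state. Any single occupied server's occupation is therefore stochastically dominated by a bounded-state continuous-time Markov chain with arrival rate at most $\bar\lambda$ and per-occupant departure rate at least $\mu_{\min}$, and such a chain hits $\bZero$ with an exponential tail of some rate $\alpha_0>0$ independent of $r$. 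I thus take $\alpha(r)\equiv \alpha_0$ and $\beta(r) = (2/\alpha_0)\log r$, obtaining $r e^{-\alpha(r)\beta(r)} = r^{-1}\to 0$, well within the polynomial window on which stability holds.

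The main obstacle is converting the qualitative conclusion ``$|U^r-Q^r|/r \to 0$'' into the sharper bound $o_P(r^{1/2+\epsilon})$ claimed by the theorem. This requires that Theorem~\ref{th-reduction-uni} deliver a quantitative control on the number of empty ``holes'' to the left of the rightmost occupied server: under FF, each hole is filled by the next arrival directed to an empty server, so holes live on the timescale $\beta(r)=O(\log r)$, giving $|U^r-Q^r|$ of at most polylogarithmic order. Combined with a quantitative refinement of Proposition~\ref{th-grand-fluid-convergence}(i)---obtained via the local fluid-limit analysis of Section~\ref{sec-fsp-dynamics} together with~\eqref{eq-reduction-zp111}, and giving $|Q^r(\infty)-q^{*,a}r| = O_P(r^{1/2+\epsilon/2})$---the triangle inequality yields $|U^r(\infty)-q^{*,a}r| = o_P(r^{1/2+\epsilon})$, as required. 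The matching lower bound on $U^r$ is for free from $U^r \ge Q^r$.
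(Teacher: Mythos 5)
Your overall architecture matches the paper's: reduce to Theorem~\ref{th-reduction-uni} with $\hat q=q^{*,a}$, and verify the per-server emptying rate by noting that on the concentration event $X_{(i)}^r(t)\ge X^r_{\bZero}(t)=\Theta(r)$, so each occupied server sees $O(1)$ arrival rate and empties at a constant rate $\alpha_0$ (this part is essentially the paper's argument). The gap is in the precision. Theorem~\ref{th-reduction-uni} delivers $U^r(\infty)<\hat q r+2\xi(r)$ only if condition \eqn{eq-reduction-uni} holds with that same $\xi(r)$ \emph{uniformly over the whole window} $[0,\beta(r)]$. You verify it only at the crude scale $\xi(r)=\delta r$, using Proposition~\ref{th-grand-fluid-convergence}(i) (which is a weak-convergence, $o(r)$-level statement) plus fluid-limit stability; that yields $|U^r(\infty)-q^{*,a}r|/r\Rightarrow 0$, not the claimed $o_P(r^{1/2+\epsilon})$. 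The paper instead proves the sharp uniform-in-time concentration \eqn{eq-reduction-az}, $\sup_{t\le r^\nu}\|\bX^r(t)-r\bx^{*,a}\|\le r^{1/2+\epsilon}$ with high probability, and this is the technical core: it is established by chopping $[0,2Tr^\nu]$ into $O(1)$-length subintervals, proving a local-fluid-limit lemma (Lemma~\ref{lem-lfl}) whose linear ODE is shown to be Hurwitz on $\tilde\cx$ via the quadratic form $\nabla^2 L^{(a)}(\bx^{*,a})$, and iterating the contraction. When you invoke ``a quantitative refinement of Proposition~\ref{th-grand-fluid-convergence}(i) obtained via the local fluid-limit analysis of Section~\ref{sec-fsp-dynamics}'' you are citing precisely this missing ingredient rather than supplying it; moreover, even granted such a refinement, what Theorem~\ref{th-reduction-uni} needs is the bound on $Q^r(t)$ over all of $[0,\beta(r)]$, not only on the stationary marginal $Q^r(\infty)$.

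The attempted bypass via the triangle inequality also fails on its own terms. The claim that under FF ``each hole is filled by the next arrival directed to an empty server, so holes live on the timescale $\beta(r)$,'' giving $|U^r-Q^r|$ of polylogarithmic order, is incorrect: FF always fills the \emph{left-most} empty server, new holes are continually created at low ranks as servers empty, and a high-ranked hole can persist for a long time. In the M/M/$\infty$ ranked-server special case ($q^*=1$, trivial packing) the number of holes below $U^r$ is of order $(r\log\log r)^{1/2}$ (cf.\ the asymptotics $U-r\sim(2r\log\log r)^{1/2}$ cited in Section~\ref{sec-rel-prev-work}), which is far larger than polylogarithmic, so the heuristic proves a false statement already there. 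Controlling $U^r-Q^r$ at the $r^{1/2+\epsilon}$ scale is not a cheap add-on; in this paper it is obtained exactly by running Theorem~\ref{th-reduction-uni} with $\xi(r)=C_g r^{1/2+\epsilon}$, which again requires \eqn{eq-reduction-az}. Finally, note that the lower bound is not ``for free'': $U^r\ge Q^r$ helps only once you know $Q^r(\infty)\ge q^{*,a}r-o(r^{1/2+\epsilon})$ (it does not follow from the universal bound \eqn{eq-lower-bound-uni}, since $q^{*,a}>q^*$), and in the paper this too is read off from \eqn{eq-reduction-az} at $t=0$.
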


\begin{prop}[From theorem 1 in \cite{StZh2015}]
\label{th-grand-zp-convergence} 
Let parameter $p<1$ is such that $1-\kappa (1-p) > 7/8$, where $\kappa \doteq 1+\max_{\bk} \sum_i k_i$, 
or, equivalently, $p \in (1-1/(8\kappa),1)$. 
Consider a sequence of systems under the GRAND($Z^p$) algorithm, indexed by
$r\to\infty$. Then, $q^r(\infty) \Rightarrow q^{*}$.
\end{prop}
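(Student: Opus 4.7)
The plan is to treat GRAND($Z^p$) as a version of GRAND($aZ$) with a state-dependent, vanishing effective parameter $a_r=(Z^r)^{p-1}$, and to invoke the analog of Proposition~\ref{th-grand-fluid-convergence} in this regime. By \eqn{eq-reduction-zp111}, $Z^r(t)=r+O_P(r^{1/2+\epsilon})$ uniformly on polynomial windows, so on a high-probability event $a_r\approx r^{p-1}$, which tends to $0$ as $r\to\infty$. Writing $L^{(a)}(\bx)=[-\log a]^{-1}H(\bx)+\sum_{\bk}x_{\bk}$ with $H(\bx)\doteq\sum_{\bk}x_{\bk}\log[x_{\bk}c_{\bk}/e]$, the prefactor $[-\log a_r]^{-1}=[(1-p)\log r]^{-1}\to 0$, so $L^{(a_r)}$ collapses onto the LP objective $\sum_{\bk}x_{\bk}$, and its unique minimizer $\bx^{*,a_r}$ over $\cx$ converges to $\cx^*$ by Proposition~\ref{th-grand-fluid-convergence}(ii). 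Tightness of $\bx^r(\infty)$ in the compact polytope $\cx$ is immediate from the Poisson upper bound $q^r(\infty)\le z^r(\infty)$.

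Next, I would carry out a Lyapunov-drift analysis using $\Psi_r(\bx)\doteq L^{(r^{p-1})}(\bx)$, with the exponent frozen at $r^{p-1}$ rather than the fluctuating $(Z^r)^{p-1}$. Under GRAND($Z^p$) the jump rates of $\bx^r$ differ from those under GRAND($r^{p-1}Z$) only through the ceiling $\lceil(Z^r)^p\rceil$ and the small fluctuations of $Z^r$ around $r$; on the concentration event of \eqn{eq-reduction-zp111} these produce only polynomially small discrepancies. Denoting by $\mathcal{G}$ the generator of $\bx^r$, the computation from \cite{StZh2013} then yields a drift bound of the form
\[
\mathcal{G}\Psi_r(\bx) \;\le\; -c\,\bigl[\Psi_r(\bx)-\Psi_r(\bx^{*,r^{p-1}})\bigr]+\eta_r,
\]
with $\eta_r$ collecting all the correction terms stemming from the time variation of the effective $a$.

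The crux is quantifying $\eta_r$. The corrections pick up the prefactor $|\log a_r|^{-1}\sim(\log r)^{-1}$ from $L^{(a)}$, partial derivatives of $L^{(a)}$ that grow as $a\downarrow 0$, and polynomial-in-$r$ variation bounds on components of $\bX^r$ whose configurations involve up to $\kappa-1$ additional customers. After balancing these against the Poisson concentration scale $r^{1/2+\epsilon}$ from \eqn{eq-reduction-zp111-spec}, one obtains a bound $\eta_r=O(r^{-\gamma})$ for some $\gamma>0$ precisely when $1-\kappa(1-p)>7/8$; this is where the hypothesis on $p$ enters. Together with strong convexity of $L^{(a)}$ on $\cx$, this forces $\|\bx^r(\infty)-\bx^{*,r^{p-1}}\|\Rightarrow 0$, and Proposition~\ref{th-grand-fluid-convergence}(ii) then yields $q^r(\infty)\Rightarrow q^*$.

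The main obstacle is precisely this quantitative error tracking: the Lyapunov function $L^{(a)}$ has derivatives that become singular in the regime of interest ($a_r\downarrow 0$), and the only reason the drift calculation survives is the slow, logarithmic decay of the relevant prefactors. The overall structure --- tightness, Lyapunov drift, identification of the limit via the known GRAND($aZ$) minimizer --- is a familiar template; the substantive content of the proposition lies in the specific threshold $p>1-1/(8\kappa)$ under which the interplay between $a_r\to 0$, $|\log a_r|\to\infty$, and the polynomial fluctuation scale of the Markov chain remains controlled.
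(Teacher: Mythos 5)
Your proposal is an outline whose two load-bearing steps are asserted rather than proved, and the assertion at the crux is exactly where the substance of this proposition lies. First, the drift inequality $\mathcal{G}\Psi_r(\bx) \le -c\,[\Psi_r(\bx)-\Psi_r(\bx^{*,r^{p-1}})]+\eta_r$ with an $r$-independent $c$ is not established anywhere in your sketch and is not something that can be read off from \cite{StZh2013}: the known fact is only that $L^{(a)}$ decreases strictly along fluid trajectories on $\cx$ away from $\bx^{*,a}$, with no uniform gap proportional to $\Psi_r-\Psi_r^*$, and the generator computation for the prelimit process brings in gradient and Hessian terms of $L^{(a)}$ that blow up near the boundary $x_{\bk}=0$ (the gradient contains $\log x_{\bk}$, the Hessian $1/x_{\bk}$, both only tempered by the $1/\log r$ prefactor), plus the fact that the actual state does not live on $\cx$ but only fluctuates around it. Second, the step where the hypothesis $1-\kappa(1-p)>7/8$ enters is pure hand-waving: you say that ``after balancing'' one gets $\eta_r=O(r^{-\gamma})$ ``precisely when'' the threshold holds, but you exhibit no mechanism that produces the exponent $7/8$, nor the role of $\kappa$ (which enters through the number of departures needed to empty a tagged server whose arrival rate under GRAND($Z^p$) is of order $r^{1-p}$). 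Since the threshold is the only nontrivial content of the statement, a proof that does not derive it has not proved the proposition.

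For comparison: the paper does not prove this proposition at all; it imports it from theorem 1 of \cite{StZh2015}, and the structure of that proof (visible from Section~\ref{sec-zp-ff-proof} here, which extends it) is quite different from your steady-state Lyapunov-drift template. There, one fixes $s\in(7/8,\,1-\kappa(1-p))$ (this is where the nonemptiness condition, i.e.\ the hypothesis on $p$, comes from), takes a stationary version of the process over a polynomially long window, subdivides it into order $r^{3(1-s)+\epsilon}$ subintervals of length $r^{s-1}$, and proves pathwise assertions about the decrease and subsequent confinement of the Lyapunov function over these subintervals, using the concentration bound \eqn{eq-reduction-zp111} and strong-approximation/martingale estimates whose error scale is the source of the $7/8$. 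So even granting your overall template (tightness, Lyapunov function $L^{(r^{p-1})}$, identification of the limit via Proposition~\ref{th-grand-fluid-convergence}(ii)), the argument you would actually have to supply is the subinterval, pathwise analysis, and your sketch replaces it with an unproved one-line generator bound.
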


Our main result for GRAND($Z^p$)-FF is

\begin{thm}
\label{th-grand-zp-ff}
Let parameter $p \in (1-1/(8\kappa),1)$ (as in Proposition~\ref{th-grand-zp-convergence}).
Consider a sequence of systems under the GRAND($Z^p$)-FF algorithm, indexed by
$r\to\infty$. 
Then, 
in addition to the universal lower bound \eqn{eq-lower-bound-uni}, the following upper bound holds:
$$
0 \vee (U^r(\infty)-r q^{*})/r \Rightarrow 0,
$$
and then
$$
U^r(\infty)/r  
\implies q^*.
$$
\end{thm}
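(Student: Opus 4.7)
The plan is to apply the generic reduction Theorem~\ref{th-reduction-uni} with rank-oblivious algorithm $\mathrm{ALG} = \mathrm{GRAND}(Z^p)$ and $\hat q = q^*$. Proposition~\ref{th-grand-zp-convergence} already provides $q^r(\infty) \Rightarrow q^*$, so the task reduces to verifying the two remaining hypotheses of Theorem~\ref{th-reduction-uni}: (i) starting from steady state, with high probability $Q^r(t)/r$ stays within $\delta$ of $q^*$ uniformly over a window $[0,\beta(r)]$; and (ii) each occupied server empties at rate at least $\alpha(r)$, with $r e^{-\alpha(r)\beta(r)}\to 0$. Since the target conclusion is only the fluid-scale bound $0 \vee (U^r(\infty)-q^* r)/r \Rightarrow 0$, I can afford a loose parameter choice: take $\alpha(r)\equiv \alpha > 0$ a constant and $\beta(r) = c \log r$ with $c\alpha > 1$, so that $r e^{-\alpha\beta(r)} = r^{1-c\alpha}\to 0$. (This is the main qualitative difference from the GRAND($aZ$)-FF Theorem~\ref{th-grand-fluid}, where the sharper conclusion forces a much longer window $\beta(r) = r^\nu$ and correspondingly tighter sample-path control.)

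Verifying the emptying rate (ii): fix an arbitrary occupied server $S$ and consider the restricted process $\bk_S(t) \in \bar\ck$ tracking only its configuration. Service completions at $S$ occur at the bounded rate $\sum_i k_{S,i}\mu_i \le \kappa \max_i \mu_i$, while placements of new customers into $S$ occur at total rate $\sum_{i:\bk_S+\be_i\in\ck} \lambda_i r / X^r_{(i)}(t)$. On the high-probability event -- inherited from (i) combined with the steady-state fluid-limit description of GRAND($Z^p$) from \cite{StZh2015} -- that $X^r_{(i),\diamond}(t) \geq c_0 r$ for every $i$ uniformly over $t\in[0,\beta(r)]$, the per-server arrival rate is bounded uniformly in $r$. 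Hence $\bk_S(t)$ is stochastically dominated by an irreducible, positive recurrent CTMC on the finite set $\bar\ck$ with $r$-independent rates, whose hitting time $\tau_S$ of $\bZero$ satisfies $\pr(\tau_S > t) \leq C e^{-\alpha t}$ for some $\alpha > 0$. A union bound over the $O(r)$-many initially occupied servers then gives, with probability $1 - O(r^{1-c\alpha})$, that every such server hits $\bZero$ within $[0,\beta(r)]$.

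The main obstacle is (i): upgrading the pointwise stationary convergence $q^r(\infty)\Rightarrow q^*$ to a uniform-in-$t$ statement over a window of length $c\log r$. Pointwise-in-$t$ closeness follows immediately from stationarity; covering the entire interval requires a sample-path argument. I would invoke the local-fluid-limit analysis used in \cite{StZh2015} to establish Proposition~\ref{th-grand-zp-convergence}: in steady state, $\bx^r(0)$ lies close in probability to the GRAND($Z^p$) fluid equilibrium, and the fluid trajectories of $\bx^r(\cdot)$ are locally stable there, so a macroscopic excursion of $Q^r(\cdot)/r$ demands fluid-time of order one, equivalently original-time of order $r \gg c\log r$. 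Paired with the sample-path bound \eqn{eq-reduction-zp111}, which keeps each $Y^r_i(t)$ within $r^{1/2+\epsilon}$ of $\rho_i r$ throughout $[0,r^\nu]$ and thereby preserves all fluid-scale arrival-rate estimates used in (ii), this yields the desired uniform concentration. With (i) and (ii) established, Theorem~\ref{th-reduction-uni} produces the upper bound $0 \vee (U^r(\infty)-q^* r)/r \Rightarrow 0$; combining with the universal lower bound \eqn{eq-lower-bound-uni} yields $U^r(\infty)/r \Rightarrow q^*$.
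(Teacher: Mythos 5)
Your overall strategy (reduce to Theorem~\ref{th-reduction-uni} with $\hat q=q^*$) matches the paper, but the key step where you verify the emptying-rate condition is wrong, and it is precisely the step where GRAND($Z^p$) differs from GRAND($aZ$). You claim that, with high probability, $X^r_{(i),\diamond}(t)\ge c_0 r$ for every $i$, so that the per-server arrival rate $\sum_i \lambda_i r / X^r_{(i)}(t)$ is bounded uniformly in $r$, allowing a constant emptying rate $\alpha$ and a window $\beta(r)=c\log r$. Nothing in \cite{StZh2015} gives such a bound, and it is false in general: under GRAND($Z^p$) the steady state concentrates on the optimal packing set $\cx^*$, whose points typically put all mass on ``full'' configurations. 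For example, with a single customer type and servers of capacity $K\ge 2$, asymptotically almost all occupied servers are in configuration $K$, so $X^r_{(i),\diamond}=o(r)$. The only lower bound available on $X^r_{(i)}$ is via the designated empty servers, $X^r_{\bZero}=\lceil (Z^r)^p\rceil \asymp r^p$, so the per-occupied-server arrival rate can be of order $r^{1-p}\to\infty$, and a server with $m\le\kappa$ customers empties only with probability of order $r^{\kappa(p-1)}$ per attempt. Hence the correct choice is $\alpha(r)\asymp r^{\kappa(p-1)}\to 0$, and your window $\beta(r)=c\log r$ then violates condition \eqn{eq-reduction-uni000}, since $\alpha(r)\beta(r)\to 0$. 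This is why the paper takes $\beta(r)=r^\nu$ with $\nu>\kappa(1-p)$; the logarithmic window cannot be salvaged without an additional (unproved, and generally unavailable) structural estimate on $X^r_{(i),\diamond}$.

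A secondary gap is your verification of the uniform-in-$t$ control of $Q^r(t)/r$ over the window. The heuristic ``a macroscopic excursion of $Q^r/r$ demands original time of order $r$'' misreads the scaling: in this regime arrival rates are $O(r)$ and service rates are $O(1)$, so fluid limits live on unscaled time and macroscopic changes occur over $O(1)$ time. What is actually needed (and what the paper does) is to extend the steady-state argument of theorem 1 in \cite{StZh2015} to a sample-path statement over an interval of length $r^\nu$, by running the argument of section 5.4 there over twice as many $r^{s-1}$-long subintervals and strengthening its assertions (a) and (b) so that $q^r(t)\le q^*+\delta$ holds throughout the second half of the window. Your proposal gestures at ``local stability of fluid trajectories'' but does not supply this uniform estimate, and combined with the incorrect constant emptying rate the proof does not go through as written.
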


\section{Combination of a rank-oblivious algorithm with First-Fit}
\label{sec-reduction-uni}

\begin{thm}
\label{th-reduction-uni}
Let ALG be a placement algorithm, oblivious of server ranks, under which $\bX(\cdot)$ is a Markov process.
For each $r$, consider the process $\bX^r(\cdot)$ under ALG in stationary regime. 
Suppose, there exist positive upper bounded function $\alpha(r)$,
positive function $\beta(r)\to \infty$, positive function $\xi(r) \le C_g r$, and a subset-valued function $E(r) \subset \R^{|\ck|}$,
 such that the following conditions hold:
\beql{eq-reduction-uni000}
\lim_{r\to\infty} \log r - \alpha(r) \beta(r) = -\infty;
\eeql
\beql{eq-reduction-uni111}
\lim_{r\to\infty} \pr\left\{\bX^r(t)  \in E(r), ~~\forall t \in [0,\beta(r)]  \right\} = 1; 
\eeql
\beql{eq-reduction-uni}
\lim_{r\to\infty} \pr\{Q^r(t) \le \hat q r + \xi(r)\},  ~~\forall t \in [0,\beta(r)] \} = 1;
\eeql
\beql{eq-reduction-uni222}
\mbox{in interval $[0,\beta(r)]$, as long as $\bX^r(t) \in E(r)$, any occupied server empties at the rate at least $\alpha(r)$.}
\eeql
(More precisely, condition \eqn{eq-reduction-uni222} means the following: there exists a constant $\tau>0$ such that 
for any $t\in [0,\beta(r)-\tau]$, if a server is occupied at time $t$, then, with probability at least $\alpha(r)\tau$,
at some time $t' \in [t,t+\tau]$ either the server empties or $\bX^r(t') \not\in E(r)$.)
Then, under the algorithm ALG-FF (which is ALG combined with the FF rule for taking empty servers)
$\pr\{U^r(\infty) < \hat q r + 2\xi(r) \} \to 1$.
\end{thm}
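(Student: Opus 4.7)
The plan is to use stationarity to replace $U^r(\infty)$ with $U^r(\beta(r))$, and then combine two observations: (i) as long as $Q^r(t)\le \hat q r + \xi(r)$, the First-Fit rule places every freshly taken empty server at rank at most $\hat q r + \xi(r) + 1$; (ii) with probability tending to $1$, every server occupied at time $0$ empties at some moment in $[0,\beta(r)]$. On the intersection of these events, any server with rank exceeding $\hat q r + \xi(r)+1$ must be empty at time $\beta(r)$, so $U^r(\beta(r))\le \hat q r + \xi(r) + 1 < \hat q r + 2\xi(r)$, the last inequality holding for $r$ large enough (WLOG $\xi(r)>1$).

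Formally, by stationarity of $\bX^r(\cdot)$, the distribution of $U^r(\beta(r))$ coincides with that of $U^r(\infty)$, so it suffices to prove $\pr\{U^r(\beta(r)) < \hat q r + 2\xi(r)\}\to 1$. Introduce the events
$$A = \{Q^r(t)\le \hat q r + \xi(r),~\forall t\in[0,\beta(r)]\},\quad B = \{\bX^r(t)\in E(r),~\forall t\in[0,\beta(r)]\},$$
and let $C$ be the event that every server occupied at time $0$ empties at some time in $[0,\beta(r)]$. By \eqn{eq-reduction-uni} and \eqn{eq-reduction-uni111}, $\pr(A),\pr(B)\to 1$. On $A$, at every $t\in[0,\beta(r)]$ the left-most empty server has rank at most $Q^r(t)+1\le \hat q r + \xi(r)+1$, so FF never takes a server of higher rank during $[0,\beta(r)]$. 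Hence on $A\cap C$ any server of rank $m>\hat q r + \xi(r)+1$ must be empty at time $\beta(r)$: if empty at time $0$ it stayed empty; if occupied at time $0$ it emptied at some $t_0\in[0,\beta(r)]$ and then stayed empty. This yields the target bound on $U^r(\beta(r))$.

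It remains to show $\pr(C)\to 1$. Fix a server $\ell$ occupied at time $0$ and partition $[0,\beta(r)]$ into $n=\lfloor\beta(r)/\tau\rfloor$ sub-intervals of length $\tau$. Using condition \eqn{eq-reduction-uni222} in the form it is stated, and the strong Markov property applied at the beginning of each sub-interval, the conditional probability that \emph{either} $\ell$ empties during the sub-interval \emph{or} $\bX^r(\cdot)$ exits $E(r)$ during the sub-interval is at least $\alpha(r)\tau$. Iterating, the probability that neither event ever occurs is at most $(1-\alpha(r)\tau)^n \le \exp(-\alpha(r)(\beta(r)-\tau))$. On $B$, $\bX^r(\cdot)$ never leaves $E(r)$, so on $B$ this bound forces $\ell$ to have emptied. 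A union bound over the at most $Q^r(0)$ initially-occupied servers, together with $\E Q^r(0)\le \E Z^r(0)=r$, gives
$$
\pr(C^c\cap B)\le r\exp(-\alpha(r)(\beta(r)-\tau)),
$$
which tends to $0$ by \eqn{eq-reduction-uni000}. Therefore $\pr(A\cap B\cap C)\to 1$, finishing the argument.

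The main technical subtlety is that the rate-$\alpha(r)$ emptying bound is guaranteed only while $\bX^r$ remains in $E(r)$, so per-sub-interval emptying events across $[0,\beta(r)]$ are not simply independent. The trick is to exploit exactly the way \eqn{eq-reduction-uni222} is phrased: packaging ``$\ell$ emptied'' and ``$\bX^r$ left $E(r)$'' into a single ``success'' event in each sub-interval makes the strong Markov iteration clean, and the dependence on $E(r)$ is then stripped off by intersecting with the high-probability event $B$. Everything else is a direct consequence of the FF rule and stationarity.
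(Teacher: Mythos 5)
Your proof is correct and follows essentially the same route as the paper: reduce to time $\beta(r)$ by stationarity, note that under FF and the event $\{Q^r(t)\le \hat q r+\xi(r)\}$ no server of rank beyond roughly $\hat q r+\xi(r)$ is ever taken, and show via the $\tau$-subinterval geometric iteration from \eqn{eq-reduction-uni222} that all initially occupied servers empty with probability tending to one, using \eqn{eq-reduction-uni000}. The only cosmetic difference is that the paper handles the ``only while in $E(r)$'' caveat by coupling with a stopped (modified) process, whereas you package the exit from $E(r)$ into the per-subinterval success event and then intersect with the high-probability event \eqn{eq-reduction-uni111}; both are equivalent.
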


This results formalizes the following simple argument. If the process is such that in a time interval $[0,\beta(r)]$ the number of occupied servers is at most $N' < N$, then during this time no new empty server located to the right of $N$ will be ``taken.''. If initially the number of occupied servers is $O(r)$, and any occupied server empties at rate at least $\alpha(r)$, then the expected number of occupied servers to the right of $N$ at time 
$\beta(r)$ is upper bounded by $O(r) e^{-\alpha(r)\beta(r)}$. If this upper bound vanishes as $r\to\infty$, which is equivalent to 
$\log r - \alpha(r) \beta(r) = -\infty$, then the probability of an occupied server present to the right of $N$ vanishes. 
This type of argument was used, for example, in \cite[proof of proposition 5.9]{Aldous} for the M/M/$\infty$ with ranked servers, but there 
the ``emptying rate'' $\alpha(r)$ is automatically constant, 
because any customer service completion empties its server.
In our case a server empties only when its randomly changing configuration (due to customer departures and/or arrivals) hits ``empty'' configuration $\bZero$. Moreover, the arrival rates experienced by an individual server depend on the state of the entire system. 
That is why for the purposes of proving Theorems~\ref{th-grand-fluid} and \ref{th-grand-zp-ff} we need more general conditions, 
which allow the emptying rate $\alpha(r)$ to be decreasing with $r$, and which hold with high probability (as opposed to always).

\begin{proof}[Proof of Theorem~\ref{th-reduction-uni}]  Consider the stationary version of the process in the interval $[0,\beta(r)]$. 
Consider also a modified version of the process, which evolves the same way as the original process, except when/if the event in \eqn{eq-reduction-uni111} or \eqn{eq-reduction-uni}  
is violated for the first time, the process ``stops,'' in that all servers immediately empty and stay empty until time $\beta(r)$. 
(The modified process is, of course, non-stationary.) If the original and modified processes are coupled in the natural way (so that they coincide
until and unless the event in \eqn{eq-reduction-uni111}  or \eqn{eq-reduction-uni}  is violated), we see that,
as $r\to\infty$, the probability that the realizations of the original and modified processes coincide goes to $1$.

By condition \eqn{eq-reduction-uni222}, the modified process is such that the probability that 
an initially occupied server
does not become empty within time $t$ is at most 
$$
C_g e^{-\alpha(r) t}.
$$
The probability that the number $Q^r(0)$ of initial occupied servers is less than $(\hat q +C_g) r$ goes to $1$ as $r\to\infty$.
If $Q^r(0) \le (\hat q +C_g) r$, then the expected number of those initial occupied servers, that never emptied 
by time $\beta(r)$ is at most
$$
C_g r e^{-\alpha(r) \beta(r)},
$$
and therefore vanishes as $r\to\infty$. We conclude that for the modified process, and then for the original stationary process as well, 
\beql{eq-init-empty-uni}
\pr\{ \mbox{All initially occupied servers will empty at least once in $[0,\beta(r)]$} \} \to 1, ~~r\to\infty.
\eeql
But, we also have \eqn{eq-reduction-uni}. If the event in  \eqn{eq-reduction-uni} holds, then, by the definition
of ALG-FF, no new empty server with rank $\hat q r + 2\xi(r)$ or larger will be ``taken,'' because there 
will be empty servers with ranks less than $\hat q r + 2\xi(r)$. 
But then, if the events in \eqn{eq-init-empty-uni} and \eqn{eq-reduction-uni} both hold, 
there are no occupied servers with ranks $\hat q r + 2\xi(r)$ or larger at time $\beta(r)$.
We conclude that $\pr\{U^r(\beta(r)) < \hat q r + 2\xi(r) \} \to 1$, and then $\pr\{U^r(\infty) < \hat q r + 2\xi(r) \} \to 1$.
\end{proof}

\section{Proof of Theorem~\ref{th-grand-fluid}}
\label{sec-fsp-dynamics}

\subsection{Initial steps and general proof structure}

Fix any $\veps \in (0,1/2)$ and any $\delta>0$.
The lower bound
\beql{eq-lower-proof-az}
\pr\{(U^r(\infty)-r q^{*,a})/r^{1/2+\veps} \ge - \delta\} \to 1
\eeql
follows from property \eqn{eq-reduction-az}, which we will prove later. 
(Note that \eqn{eq-lower-proof-az} does {\em not} follow from the universal lower bound \eqn{eq-lower-bound-uni},
because $q^{*,a}$ is in general greater than $q^*$.)

Consider the upper bound
\beql{eq-upper-proof-az}
\pr\{(U^r(\infty)-r q^{*,a})/r^{1/2+\veps} \le \delta\} \to 1.
\eeql
To prove \eqn{eq-upper-proof-az} we will apply Theorem~\ref{th-reduction-uni}, namely we will verify its conditions with the following choices:
 we choose $\beta(r) = r^\nu$ with $\nu \in (0,\veps)$; $\alpha(r)$ will be a positive constant specified later, 
 so that \eqn{eq-reduction-uni000} will be true; 
$\hat q = q^{*,a}$; $\xi(r) = C_g r^{1/2+\veps}$; condition \eqn{eq-reduction-uni111} will have form
\beql{eq-reduction-az}
\pr \{ |\bX^r(t) - r \bx^{*,a}| \le r^{1/2+\veps},~~\forall t \in [0,\beta(r)]\} \to 1.
\eeql
Given these choices, notice that condition \eqn{eq-reduction-uni} is implied by \eqn{eq-reduction-az}.
So, of these two conditions only \eqn{eq-reduction-az} will need to be proved. 

Let us prove that \eqn{eq-reduction-uni222} holds for some constant $\alpha(r)$.
Indeed, as long as
$|\bX^r(t) - r \bx^{*,a}| \le r^{1/2+\veps}$ holds, we have $x^r_{(i)}(t) \ge ar /2$, and then,
for any occupied server
the instantaneous rate at which a new arrival into this server
occurs is upper bounded by $\overline \lambda \doteq [\sum_i \lambda_i]r / [ar/2] = [\sum_i \lambda_i] / [(a/2)]$. The instantaneous rate 
of a customer departure from an occupied server is lower bounded by $\underline \mu \doteq \min_i \mu_i$.
We see that,
for any occupied server with $m \le \kappa$ customers, at any time, the probability that the next $m$ arrival/departure events will be all departures is at least $[\underline \mu/ (\underline \mu + \overline \lambda)]^\kappa$.
Further, if we fix any $\tau>0$, then for any occupied server with $m \le \kappa$ customers, at any time,
the probability that there will be at least $m$ arrival/departure events within time $\tau$ 
is lower bounded by the probability of a Poisson random variable with mean $\underline \mu \tau$ being at least $\kappa$.
We can conclude that, if we fix any $\tau>0$,
then for any server at any time, the probability that the server will become empty within time $\tau$
(or condition $|\bX^r(t) - r \bx^{*,a}| \le r^{1/2+\veps}$ will ``break'')
is at least some constant $C>0$. It remains to set $\alpha(r) = C /\tau$ to complete the proof of \eqn{eq-reduction-uni222}.

Thus, the proof of the upper bound \eqn{eq-upper-proof-az} reduces to verifying condition \eqn{eq-reduction-az}.
But, condition \eqn{eq-reduction-az}, specialized to just a single time point $t=0$, also implies the 
lower bound \eqn{eq-lower-proof-az}. Therefore, the proof of the entire Theorem~\ref{th-grand-fluid} reduces to verifying condition \eqn{eq-reduction-az} for a fixed $\veps \in (0,1/2)$ and function $\beta(r)=r^\nu$ with $\nu \in (0,\veps)$.

The proof of condition \eqn{eq-reduction-az} will follow the general approach of proof of theorem 10(ii) in \cite{SY2012}, for a different model. 
It involves dividing an $O(r^\nu)$-long time interval into $O(1)$-long subintervals, and then considering local fluid limits on the subintervals.
However, the local fluid limits (which will be defined shortly) for our model are completely different, and their properties need to be derived ``from scratch.'' In the rest of this section we formally prove \eqn{eq-reduction-az}.

\subsection{Local fluid limits and related properties}

To introduce local fluid limits
we will need functional strong law of large numbers-type properties, 
which can be obtained 
from the following strong approximation of Poisson processes (see, e.g. \cite[Chapters 1 and 2]{Csorgo_Horvath}):
\begin{prop}\label{thm:strong approximation-111-clean}
A unit rate Poisson process $\Pi(\cdot)$ and 
a standard Brownian motion $W(\cdot)$ can be constructed on a common
probability space in such a way that the following holds.
For some fixed positive constants $C_1$, $C_2$, $C_3$,   such that 
$\forall T>1$ and
$\forall u \geq 0$
\[
\pr\left(\sup_{0 \leq t \leq T} |\Pi(t) - t - W(t)| \geq C_1 \log T + u\right) \leq C_2 e^{-C_3 u}.
\]
\end{prop}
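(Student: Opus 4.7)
The plan is to deduce this continuous-time Poisson--Brownian strong approximation from two ingredients: the classical Komlos--Major--Tusnady (KMT) strong approximation theorem for partial sums of i.i.d.\ random variables with a finite moment generating function near the origin, and standard exponential tail bounds for the local fluctuations of a Poisson process and a Brownian motion over a unit time interval. The KMT theorem provides, on a common probability space, i.i.d.\ centered unit-variance variables $\xi_1,\xi_2,\ldots$ and i.i.d.\ standard normals $\eta_1,\eta_2,\ldots$ such that, with $S_k=\sum_{j\le k}\xi_j$ and $T_k=\sum_{j\le k}\eta_j$,
\[
\pr\Bigl(\max_{1\le k\le n}|S_k-T_k|\ge C_1'\log n + u\Bigr)\le C_2'e^{-C_3'u}
\]
for all integers $n\ge 1$ and all $u\ge 0$. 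I would apply this with $\xi_j = N_j - 1$ for i.i.d.\ Poisson$(1)$ variables $N_j$, construct the Poisson process $\Pi$ by taking $\Pi(k)\doteq\sum_{j\le k}N_j$ at integer times and filling in the jump locations inside each interval $[k,k+1]$ by independent uniform order statistics (conditional on $N_{k+1}$), and construct the Brownian motion $W$ by setting $W(k)\doteq T_k$ at integer times and attaching independent Brownian bridges on each $[k,k+1]$. This makes $\Pi$ a genuine unit-rate Poisson process and $W$ a standard Brownian motion, coupled so that $\Pi(k)-k-W(k)=S_k-T_k$ at integer times.

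To pass from integer times to all $t\in[0,T]$, for $t\in[k,k+1]$ I would decompose
\[
\Pi(t)-t-W(t) = \bigl[\Pi(k)-k-W(k)\bigr] + \bigl[\Pi(t)-\Pi(k)-(t-k)\bigr] - \bigl[W(t)-W(k)\bigr].
\]
The first bracket is controlled uniformly over $k\le T$ by the integer-time KMT bound displayed above. The second bracket is bounded in absolute value by $N_{k+1}+1$, which has sub-exponential tails: $\pr(N_{k+1}\ge v)\le e^{-c v}$ for $v$ sufficiently large. The third bracket is the oscillation of a standard Brownian motion on a unit interval, satisfying $\pr\bigl(\sup_{s\in[0,1]}|W(k+s)-W(k)|\ge v\bigr)\le 2e^{-v^2/2}$. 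A union bound over the at most $\lceil T\rceil$ integer subintervals, applied at the threshold $v=c'(\log T + u)$, then shows that with probability at least $1 - C''e^{-C'''u}$ the total local-fluctuation contribution is at most of order $\log T + u$.

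The main obstacle is not conceptual but lies in the careful bookkeeping required to show that, after combining the integer-time KMT bound with the local-fluctuation union bound, the composite bound retains \emph{exactly} the asserted form $C_1\log T + u$ with exponential tail $C_2 e^{-C_3 u}$, for a single set of constants that works uniformly in $T>1$ and $u\ge 0$ simultaneously. Enlarging $C_1$ absorbs the additional logarithmic contribution from the union bound over $\lceil T\rceil$ intervals, while $C_2$ and $C_3$ must be adjusted to accommodate the sub-exponential (rather than Gaussian) tail of the Poisson local-fluctuation term, since it is this tail that dictates the final exponential decay rate. The deep ingredient, the KMT strong approximation itself, is invoked as a black box from the standard reference already cited in the excerpt (Csorgo--Horvath, Chapters 1 and 2); re-deriving it is outside the scope of this proof plan.
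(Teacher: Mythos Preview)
The paper does not prove this proposition at all: it is stated as a known result, with a parenthetical citation ``(see, e.g.\ \cite{Csorgo_Horvath}, Chapters 1 and 2)'' immediately preceding the statement, and no proof is given. Your proposal therefore goes beyond what the paper does---you sketch an actual derivation, whereas the paper simply invokes the result as a black box from the literature.

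Your sketch is sound. Building the Poisson process from i.i.d.\ Poisson(1) increments with uniform order statistics inside each unit interval, building the Brownian motion from the KMT-coupled Gaussian partial sums with independent bridges, and then controlling the three terms in your decomposition (integer-time KMT error, Poisson local increment, Brownian local oscillation) via a union bound over $\lceil T\rceil$ unit intervals is exactly the standard route to this kind of statement. Your remark that the bookkeeping---absorbing the extra $\log T$ from the union bound into $C_1$ and letting the sub-exponential Poisson tail dictate $C_3$---is the only place requiring care is accurate; after a linear change of variable in $u$ the bound takes the stated form with new absolute constants. So there is no gap, just more detail than the paper itself supplies.
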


As in \cite{StZh2013}, we will use notation $(\bk,i)$ for the ``edge'' between configurations $\bk$ and $\bk-\be_i$ (if the latter exists).
If there is a type-$i$ customer arrival into a server in configuration $\bk-\be_i$ (changing it into $\bk$), we call this an arrival 
``along edge $(\bk,i)$;'' similarly, if there is a type-$i$ customer service completion in a server in configuration $\bk$ (changing it into $\bk-\be_i$),
we call this a departure ``along edge $(\bk,i)$.'' Denote by $\cm \doteq \{(\bk,i) ~|~ \bk \in \ck, \bk -\be_i \in \bar \ck\}$ the set of all edges.

For each $(\bk,i)\in \cm$, consider an independent
unit-rate Poisson process $\{\hat\Pi_{\bk i}(t), ~t\ge 0\}$, common for all $r$, 
which drives
departures along edge $(\bk,i)$. Namely, 
let $D^r_{\bk i}(t)$ denote the total 
number of  departures along the edge $(\bk,i)$ in $[0,t]$; then
\beql{eq-driving-dep555}
D^r_{\bk i}(t) = \hat \Pi_{\bk i} \left(\int_{0}^t X_{\bk}^r(s) k_i \mu_i ds\right) 
= \hat \Pi_{\bk i} \left(\int_{0}^t x_{\bk}^r(s) k_i \mu_i r ~ds\right).
\end{equation}
Similarly, for each $(\bk,i)\in \cm$, consider an independent
unit-rate Poisson process $\{\Pi_{\bk i}(t), ~t\ge 0\}$, common for all $r$, which drives
arrivals along edge $(\bk,i)$. Namely, 
let $A^r_{\bk i}(t)$ denote the total 
number of  arrivals along the edge $(\bk,i)$ in $[0,t]$; then
\beql{eq-driving-arr555}
A^r_{\bk i}(t) = \Pi_{\bk i} \left(\int_{0}^t \frac{X_{\bk-\be_i}^r(s)}{X_{(i)}^r(s)} \lambda_i r ~ds\right)
= \Pi_{\bk i} \left(\int_{0}^t \frac{x_{\bk-\be_i}^r(s)}{x_{(i)}^r(s)} \lambda_i r ~ds\right),
\end{equation}
with the convention for the case $X_{(i)}^r(s)=0$ such that $X_{\bk-\be_i}^r(s)/X_{(i)}^r(s)=1$ if $\bk = \be_i$,
and $X_{\bk-\be_i}^r(s)/X_{(i)}^r(s)=0$ otherwise.

Recall that $0 < \nu < \veps$. 
From Proposition~\ref{thm:strong approximation-111-clean}, by replacing $T$ with $r^{1+\nu}$ and $u$ with $r^{1/4}$,
we obtain the following 

\begin{prop}
\label{thm:strong approximation-111}
Any subsequence of $r\to\infty$, contains a further 
subsequence (with $r$ increasing sufficiently fast),
such that, w.p.1:
\beql{eq-in-prop8}
\sup_{0 \leq t \leq r^{1+\nu}} r^{-1/2-\veps/2}| \Pi_{\bk i}(  t) - t| 
\to 0, ~~~\forall (\bk,i)\in \cm,
\eeql
and analogously for $\hat \Pi_{\bk i}( \cdot)$.
\end{prop}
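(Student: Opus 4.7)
My plan is to derive the proposition directly from the strong approximation in Proposition~\ref{thm:strong approximation-111-clean} together with a Borel--Cantelli argument and a standard size bound on Brownian motion. For each edge $(\bk,i)\in\cm$, let $W_{\bk i}(\cdot)$ be the Brownian motion coupled with $\Pi_{\bk i}(\cdot)$ as guaranteed by Proposition~\ref{thm:strong approximation-111-clean}, and let $\hat W_{\bk i}(\cdot)$ be the analogous one coupled with $\hat\Pi_{\bk i}(\cdot)$; all such pairs can be taken to be mutually independent.

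First I would instantiate Proposition~\ref{thm:strong approximation-111-clean} with $T=r^{1+\nu}$ and $u=r^{1/4}$, obtaining
$$
\pr\!\left(\sup_{0\le t\le r^{1+\nu}} |\Pi_{\bk i}(t) - t - W_{\bk i}(t)| \ge C_1(1+\nu)\log r + r^{1/4}\right) \le C_2 e^{-C_3 r^{1/4}},
$$
and similarly for $\hat\Pi_{\bk i}$. A union bound over the finite set $\cm$ preserves this tail estimate up to a constant. Given any subsequence $r\to\infty$, I would extract a further subsequence $\{r_n\}$ growing fast enough that $\sum_n e^{-C_3 r_n^{1/4}}<\infty$ (any strictly increasing integer-valued choice suffices). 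Borel--Cantelli then gives, w.p.1 for all large $n$ and uniformly in $(\bk,i)\in\cm$,
$$
\sup_{0\le t\le r_n^{1+\nu}} |\Pi_{\bk i}(t) - t - W_{\bk i}(t)| \le C_1(1+\nu)\log r_n + r_n^{1/4}.
$$
Dividing by $r_n^{1/2+\epsilon/2}$, both $\log r_n$ and $r_n^{1/4}$ are $o(r_n^{1/2+\epsilon/2})$, so this strong-approximation error, after normalization, tends to $0$ w.p.1.

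It remains to control $r^{-1/2-\epsilon/2}\sup_{0\le t\le r^{1+\nu}}|W_{\bk i}(t)|$. By the law of the iterated logarithm applied to $W_{\bk i}$ and $-W_{\bk i}$, w.p.1 $\sup_{0\le t\le T}|W_{\bk i}(t)| = O\bigl(\sqrt{T\log\log T}\bigr)$ as $T\to\infty$. Setting $T=r^{1+\nu}$, the normalized quantity is of order $r^{(\nu-\epsilon)/2}\sqrt{(1+\nu)\log\log r}$, which tends to $0$ w.p.1 because $\nu<\epsilon$; no subsequence thinning is needed for this step. The triangle inequality $|\Pi_{\bk i}(t)-t|\le |\Pi_{\bk i}(t)-t-W_{\bk i}(t)| + |W_{\bk i}(t)|$ then yields the desired convergence along $\{r_n\}$, and the argument for $\hat\Pi_{\bk i}$ is identical.

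The only delicate point is the thinning of the subsequence needed to guarantee Borel--Cantelli summability: the strong-approximation tail is $e^{-C_3 r^{1/4}}$, which is certainly summable along an integer-spaced subsequence, but the given parameter $r$ ranges over an arbitrary sequence tending to infinity and may be too dense (or not strictly increasing) for the raw sum to converge. This is exactly why the statement has the ``extract a further subsequence'' form, and it is the only mildly subtle aspect of the argument; there is no genuine obstacle once Proposition~\ref{thm:strong approximation-111-clean} and the LIL are in hand.
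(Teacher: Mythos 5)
Your argument is correct and is exactly the route the paper intends: instantiate Proposition~\ref{thm:strong approximation-111-clean} with $T=r^{1+\nu}$ and $u=r^{1/4}$, pass to a sparse subsequence so Borel--Cantelli applies, and absorb the Brownian term via $\sup_{[0,T]}|W|=O(\sqrt{T\log\log T})$, using $\nu<\epsilon$. The paper merely states this in one line, so your write-up just supplies the (standard) details; no gap.
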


Proposition ~\ref{thm:strong approximation-111} says that, w.p.1, for all large $r$, uniformly in $t \in [0, r^{1+\nu}]$,
$\Pi_{\bk i}(  t)= t + o(r^{1/2+\veps/2})$. 

Denote by $\tilde \cx$ the subspace parallel to set (manifold) $\cx$, i.e.
$$
\tilde \cx \equiv \{\tilde \bx\in \R_+^{|\ck|} ~|~ \sum_{\bk\in \ck} k_i \tilde x_{\bk} = 0, ~\forall i\in\ci \}.
$$

\begin{lem}
\label{lem-lfl}
Suppose a function $h(r)$ is fixed such that $h(r) \ge r^{1/2+\veps}$ and
$h(r) = o(r)$. Consider a sequence of initial states $X^r(0)$ such that
$$
\frac{1}{h(r)} [\bX^r(0) - r \bx^{*,a}] \to \tilde \bx(0) \in 
\R^{|\ck|}.
$$
Suppose, $T>0$ and $C_1>\max_i \mu_i k_i x_{\bk}^{*,a} ~\vee~ \max_i \frac{x_{\bk-\be_i}^{*,a}}{x_{(i)}^*} \lambda_i$ 
are fixed, where $x_{(i)}^{*,a} \doteq \sum_{\bk \in \ck} k_{i}  x_{\bk}^{*,a} + a \sum_{i'} \sum_{\bk \in \ck} k_{i'} x_{\bk}^{*,a}$.
Suppose, a deterministic sequence of the driving processes' realizations  
satisfies conditions as in Proposition ~\ref{thm:strong approximation-111}, over the time interval of length $[0,C_1 T r]$, namely  
\beql{eq-fslln-lfl}
\sup_{0 \leq t \leq C_1 T r} r^{-1/2-\veps/2}| \Pi_{\bk i}( t) - t| 
\to 0,
\eeql
and similarly for $\hat \Pi_{\bk i}(t)$.
Then any subsequence of $r$ has further subsequence along which
$$
\tilde \bx^r(t) \doteq \frac{1}{h(r)} [\bX^r(t) - r \bx^{*,a}]
$$
converges to a deterministic trajectory $\tilde \bx(\cdot)$, called {\em local fluid limit (LFL)}:
\beql{eq-conv-to-lfl}
\sup_{t \in [0,T]} \| \tilde \bx^r(t) - \tilde \bx(t) \| \to 0.
\eeql
A local fluid limit $\tilde \bx(\cdot)$ has the following properties. 
It is a Lipschitz continuous trajectory in $\R^{|\ck|}$,
satisfying linear ODE
$$
\frac{d}{dt} \tilde x_{\bk}(t) = 
\sum_{i: \bk -\be_i \in \bar \ck}  \left[ \lambda_i \frac{1}{x^{*,a}_{(i)}} \tilde x_{\bk-\be_i}(t) 
- \lambda_i \frac{x^{*,a}_{\bk-\be_i}}{(x^{*,a}_{(i)})^2} \tilde x_{(i)}(t) 
- k_i \mu_i \tilde x_{\bk}(t)  \right] 
$$
\beql{eq-lfl-ode}
- \sum_{i: \bk +\be_i \in \bar \ck} 
\left[ \lambda_i \frac{1}{x^{*,a}_{(i)}} \tilde x_{\bk}(t)  
- \lambda_i \frac{x^{*,a}_{\bk}}{(x^{*,a}_{(i)})^2} \tilde x_{(i)}(t)  
- (k_i+1) \mu_i \tilde x_{\bk+\be_i}(t)   \right], ~\bk \in \ck,
\eeql
where
 $\tilde x_{\bZero}(t) \doteq a \sum_{i} \sum_{\bk \in \ck} k_i \tilde x_{\bk}(t)$, 
$\tilde x_{(i)}(t) \doteq \sum_{\bk \in \ck} k_{i} \tilde x_{\bk}(t) + \tilde x_{\bZero}(t)$.

Furthermore, if $\tilde \bx(0) \in \tilde \cx$, then the ODE \eqn{eq-lfl-ode} solution stays in subspace $\tilde \cx$ for all $t$.
Moreover, the ODE \eqn{eq-lfl-ode},  restricted to $\tilde \cx$, 
which can be written in matrix form as $(d/dt) \tilde \bx(t) = A \tilde \bx(t)$, is stable, namely the matrix $A$ is Hurwitz -- its all eigenvalues have negative real parts.
\end{lem}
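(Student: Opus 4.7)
My plan is to work from the Poisson-driven integral representation of $\bX^r(\cdot)$ and linearize around $\bx^{*,a}$. Setting $X^r_\bk(t) = r x^{*,a}_\bk + h(r) \tilde x^r_\bk(t)$ in \eqn{eq-driving-arr555}--\eqn{eq-driving-dep555}, the zeroth-order drift terms cancel because $\bx^{*,a}$ is the equilibrium of the GRAND($aZ$) fluid ODE (Proposition~\ref{th-grand-fluid-convergence}(i)). Taylor-expanding the arrival rate $x^r_{\bk-\be_i}(s)/x^r_{(i)}(s)$ and the departure rate $k_i\mu_i x^r_\bk(s)$ to first order in $h(r)/r$ and then dividing by $h(r)$, I obtain an integral equation for $\tilde\bx^r(\cdot)$ whose drift matches the right-hand side of \eqn{eq-lfl-ode}. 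The second-order remainders contribute at most $O(Tr(h(r)/r)^2) = O(h(r)^2/r) = o(h(r))$ uniformly on $[0,T]$ since $h(r) = o(r)$, and so vanish after rescaling by $h(r)$.

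Next, by the strong approximation bound \eqn{eq-fslln-lfl}, the centered Poisson noise is $o(r^{1/2+\epsilon/2})$ uniformly on $[0,C_1 T r]$; dividing by $h(r)\ge r^{1/2+\epsilon}$ turns it into $o(r^{-\epsilon/2})=o(1)$. A Gronwall estimate using the linear (hence Lipschitz) drift gives uniform boundedness and equicontinuity of $\{\tilde\bx^r(\cdot)\}$ on $[0,T]$; Arzela-Ascoli yields a further subsequence along which $\tilde\bx^r(\cdot)$ converges uniformly, and passing to the limit in the integral equation identifies the limit as the unique Lipschitz solution $\tilde\bx(\cdot)$ of \eqn{eq-lfl-ode} starting at $\tilde\bx(0)$, giving \eqn{eq-conv-to-lfl}. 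For invariance of $\tilde\cx$, the simplest argument is conceptual: the nonlinear fluid ODE preserves the conservation laws $\sum_\bk k_i x_\bk = \rho_i$, so its linearization at $\bx^{*,a}$ must preserve the tangent space $\tilde\cx$; alternatively, one checks $\tfrac{d}{dt}\sum_\bk k_i \tilde x_\bk(t)=0$ directly from \eqn{eq-lfl-ode} by summation-by-parts over edges.

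The Hurwitz property is the main obstacle. The key observation is that the KKT conditions for $\min_\cx L^{(a)}$ force the product form $x^{*,a}_\bk = (a/c_\bk)\prod_i \theta_i^{k_i}$, and together with the equilibrium condition this implies the edgewise detailed balance
\[
\lambda_i\, x^{*,a}_{\bk-\be_i}/x^{*,a}_{(i)} \;=\; k_i \mu_i\, x^{*,a}_\bk \;\doteq\; r_{(\bk,i)}, \qquad (\bk,i)\in\cm.
\]
On $\tilde\cx$ one has $\tilde x_{\bZero}(t)=0$ and $\tilde x_{(i)}(t)=0$, so \eqn{eq-lfl-ode} reduces, after setting $\phi_\bk \doteq \tilde x_\bk/x^{*,a}_\bk$ with convention $\phi_\bZero \doteq 0$, to
\[
\dot{\tilde x}_\bk \;=\; \sum_{i:\,\bk-\be_i\in\bar\ck} r_{(\bk,i)}\,(\phi_{\bk-\be_i}-\phi_\bk) \;+\; \sum_{i:\,\bk+\be_i\in\ck} r_{(\bk+\be_i,i)}\,(\phi_{\bk+\be_i}-\phi_\bk),
\]
which is the action of a reversible-chain generator on $\ck$ with stationary distribution $x^{*,a}$ and killing at $\bZero$. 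Using the quadratic Lyapunov $V(\tilde\bx) = \tfrac12 \sum_\bk \tilde x_\bk^2/x^{*,a}_\bk$, a standard Dirichlet-form identity yields
\[
\dot V \;=\; -\!\!\sum_{\{\bk_1,\bk_2\}\subset\ck} r_e\,(\phi_{\bk_1}-\phi_{\bk_2})^2 \;-\; \sum_{i:\,\be_i\in\ck} r_{(\be_i,i)}\,\phi_{\be_i}^2,
\]
with $r_e = r_{(\bk,i)}$ on the undirected edge joining $\bk-\be_i,\bk\in\ck$. This is strictly negative for $\tilde\bx\ne 0$ provided the configuration graph on $\ck\cup\{\bZero\}$ is connected: the second sum forces $\phi_{\be_i}=0$ for every $i$, and connectedness then propagates $\phi\equiv 0$ through the first sum. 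Strict negative-definiteness of $\dot V$ on $\tilde\cx\setminus\{0\}$ yields exponential stability of the linearized ODE on $\tilde\cx$, so $A|_{\tilde\cx}$ is Hurwitz.
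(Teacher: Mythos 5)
Your proposal is correct, and its first two thirds (convergence to the LFL and invariance of $\tilde \cx$) follow essentially the same route as the paper: Taylor-expand the Poisson-driven integral representation around $r\bx^{*,a}$, cancel the zeroth-order drift using the equilibrium identity $\lambda_i x^{*,a}_{\bk-\be_i}/x^{*,a}_{(i)} = k_i\mu_i x^{*,a}_{\bk}$, absorb the centered Poisson noise via \eqn{eq-fslln-lfl} after dividing by $h(r)\ge r^{1/2+\epsilon}$, and extract a subsequential uniform limit satisfying \eqn{eq-lfl-ode}; invariance of $\tilde\cx$ via $\frac{d}{dt}\sum_{\bk}k_i\tilde x_{\bk}=-\mu_i\sum_{\bk}k_i\tilde x_{\bk}$ is exactly the paper's argument. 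One small caveat: your Taylor remainder bound $O(h(r)^2/r)$ and the lower bound on $x^r_{(i)}$ needed for the expansion presuppose that $\|\tilde\bx^r(t)\|$ stays bounded on $[0,T]$, which is what you are trying to prove; the paper handles this by freezing the process at the stopping time $\theta^r=T\wedge\min\{t:\|\tilde\bx^r(t)\|>C\|\tilde\bx(0)\|\}$ and then choosing $C$ large using linearity of the limit ODE. Your Gronwall step should be localized in the same way (or by a bootstrap); this is routine but should be said.

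Where you genuinely diverge from the paper is the Hurwitz property, and your argument is valid and self-contained. The paper takes the quadratic form $\Psi(\tilde\bx)=\frac12\tilde\bx^T\nabla^2 L^{(a)}(\bx^{*,a})\tilde\bx$ and shows $\frac{d}{dt}\Psi<0$ on $\tilde\cx\setminus\{0\}$ by identifying it with $\lim_{\delta\downarrow 0}\delta^{-2}\frac{d}{dt}L^{(a)}(\bx^{*,a}+\delta\tilde\bx)$ and invoking the explicit drift formula and Lyapunov property of $L^{(a)}$ from the GRAND($aZ$) paper; this is short but leans on citations and on an exchange-of-limits step. You instead use detailed balance at $\bx^{*,a}$ (which is exactly the product-form/KKT identity the paper itself uses in its drift cancellation), pass to the ratios $\phi_{\bk}=\tilde x_{\bk}/x^{*,a}_{\bk}$ with $\phi_{\bZero}=0$ (legitimate on $\tilde\cx$, where $\tilde x_{\bZero}=\tilde x_{(i)}=0$), and obtain a Dirichlet-form identity showing $\dot V\le 0$ with equality only at $0$, using that every $\bk\in\ck$ is connected to some $\be_i$ within $\ck$ -- which is guaranteed by the monotonicity assumption on $\bar\ck$, so your ``provided connected'' proviso is automatically satisfied and should be stated as such. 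Note that your $V(\tilde\bx)=\frac12\sum_{\bk}\tilde x_{\bk}^2/x^{*,a}_{\bk}$ is, up to the positive constant $[-\log a]^{-1}$, exactly the paper's $\Psi$, since $\nabla^2 L^{(a)}(\bx^{*,a})$ is diagonal with entries $[-\log a]^{-1}/x^{*,a}_{\bk}$; so in effect you supply an explicit, citation-free computation (reversible-generator structure plus graph connectivity) of the strict negativity that the paper obtains by quoting \cite{StZh2013}. The paper's route is shorter given the earlier results; yours makes the mechanism of stability transparent and would generalize to settings where an entropy-type Lyapunov function is not already available.
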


\begin{proof}[Proof of Lemma~\ref{lem-lfl}]
Let $C>0$ be a large fixed number; how large, we will specify later. 
For each realization consider the stopping time  
$\theta=\theta^r=T \wedge \min \{ t\ge 0 ~|~ \|\tilde \bx^r(t)\| > C \|\tilde \bx(0)\|\}$.
Consider the trajectory $X^r_{\bk}(t)$ on $[0,T]$, ``frosen'' starting time $\theta$, that is trajectory $X^r_{\bk}(t \wedge \theta)$ on $[0,T]$.
By \eqn{eq-fslln-lfl}, we have
$$
\Pi_{\bk i}( t) = t + o(h(r)), ~~ \hat \Pi_{\bk i}( t) = t + o(h(r)),
$$
uniformly in $t \leq C_1 T r$, $\bk$ and $i$. Therefore, we can write
$$
X^r_{\bk}(t \wedge \theta) = X^r_{\bk}(0) +
$$
$$
\sum_{i: \bk -\be_i \in \bar \ck}  \int_0^{t \wedge \theta} 
\left[ \frac{X^r_{\bk-\be_i}(s)}{X^r_{(i)}(s)} \lambda_i r - k_i \mu_i X^r_{\bk}(s) \right] ds
- \sum_{i: \bk + \be_i \in \bar \ck}  \int_0^{t \wedge \theta} 
\left[ \frac{X^r_{\bk}(s)}{X^r_{(i)}(s)} \lambda_i r - (k_i+1) \mu_i X^r_{\bk+\be_i}(s) \right] ds + o(h(r)).
$$
Then, by subtracting $x^{*,a}_{\bk} r$ from both sides above,
by subtracting and adding term
$$
\frac{x^{*,a}_{\bk-\be_i}}{x^{*,a}_{(i)}} \lambda_i r = k_i \mu_i x^{*,a}_{\bk} r
$$
in the first bracket, and analogous term in the second bracket, and normalizing everything by factor $1/h(r)$,
we obtain
$$
\tilde x^r_{\bk}(t \wedge \theta) = \tilde x^r_{\bk}(0) + o(1)
$$
$$
+ \frac{1}{h(r)} 
\sum_{i: \bk -\be_i \in \bar \ck}  \int_0^{t \wedge \theta}
\left[ \lambda_i \frac{1}{x^{*,a}_{(i)}} \tilde x^r_{\bk-\be_i}(s) \frac{h(r)}{r}
- \lambda_i \frac{x^{*,a}_{\bk-\be_i}}{(x^{*,a}_{(i)})^2} \tilde x^r_{(i)}(s) \frac{h(r)}{r}
- k_i \mu_i \tilde x^r_{\bk}(s) \frac{h(r)}{r}  + o(\frac{h(r)}{r}) \right] r ~ds
$$
$$
- \frac{1}{h(r)} 
\sum_{i: \bk +\be_i \in \bar \ck}  \int_0^{t \wedge \theta} 
\left[ \lambda_i \frac{1}{x^{*,a}_{(i)}} \tilde x^r_{\bk}(s) \frac{h(r)}{r}
- \lambda_i \frac{x^{*,a}_{\bk}}{(x^{*,a}_{(i)})^2} \tilde x^r_{(i)}(s) \frac{h(r)}{r}
- (k_i+1) \mu_i \tilde x^r_{\bk+\be_i}(s) \frac{h(r)}{r}  + o(\frac{h(r)}{r}) \right] r ~ds .
$$
Moving the factor $1/h(r)$ into the integrands, and using the definition of $\theta$, we see that the integrands are uniformly bounded in $[0,t \wedge \theta)$. Therefore, uniformly in $r$, all trajectories are Lipschitz, and we can choose a subsequence of $r$,
such that 
$$
\sup_{t \in [0,T]} \| \tilde \bx^r(t\wedge \theta) - \tilde \bx(t) \| \to 0,
$$
where $\tilde \bx(\cdot)$ is Lipschitz. Moreover, the limit trajectory $\tilde \bx(\cdot)$ must satisfy the linear ODE \eqn{eq-lfl-ode}, with 
 $\tilde x_{0}(t) \doteq a \sum_{i} \sum_{\bk \in \ck} k_i \tilde x_{\bk}(t)$ and
$\tilde x_{(i)}(t) \doteq \sum_{\bk \in \ck} k_{i} \tilde x_{\bk}(t) + a \sum_{i'} \sum_{\bk \in \ck} k_{i'} \tilde x_{\bk}(t)$,
 up to the time when possibly $\|\tilde \bx(t)\|$ hits level $C \|\tilde \bx(0)\|$. 
 But, we can always choose constant $C$ large enough, so that $\|\tilde \bx(t)\|$ does not hit level $C \|\tilde \bx(0)\|$ in $[0,T]$ (because ODE is linear). Thus, we obtain the convergence \eqn{eq-conv-to-lfl}. 
 
 Observe that ODE \eqn{eq-lfl-ode} is exactly the same as the one obtained as a linearization of the {\em fluid limit} $\bx(t)$ dynamics 
 (see the fluid limit results under GRAND($aZ$) in section 4 of \cite{StZh2013})
in the vicinity of the fixed point $\bx^{*,a}$. Namely, 
$$
\frac{d}{dt} \tilde \bx = \lim_{\delta\downarrow 0} \frac{1}{\delta} \frac{d}{dt} \bx \vert_{\bx=\bx^*+\delta \tilde \bx}, ~~\tilde \bx \in \R^{|\ck|}.
$$
(The fact that the linear ODE, describing the LFL, has the form \eqn{eq-lfl-ode}, cannot be {\em derived} by the linearization of fluid limit (non-linear) dynamics around the equilibrium point, because the LFL is obtained under a scaling ``finer'' than the fluid scaling.) 
Analysis in \cite{StZh2013} shows that fluid limits $\bx(\cdot)$ are attracted to $\cx$ exponentially: for each $i$, if $y_i(t) \doteq \sum_{\bk} k_i x_{\bk}(t)$
denotes the total ``amount" of class $i$ customers at time $t$, then 
$(d/dt)[y_i(t) -\rho_i] = -\mu_i [y_i(t) -\rho_i]$. This implies that (and also can be verified directly) that ODE \eqn{eq-lfl-ode} is such that
$\tilde y_i(t) \doteq \sum_{\bk} k_i \tilde x_{\bk}(t)$ is attracted to $\tilde \cx$ exponentially: for each $i$, 
$(d/dt) \tilde y_i(t) = -\mu_i y_i(t)$. Therefore, if $\tilde \bx(0) \in \tilde \cx$, the ODE \eqn{eq-lfl-ode} solution stays within $\tilde \cx$.

It remains to show that 
\beql{eq-lfl-ode555}
\mbox{the ODE \eqn{eq-lfl-ode} within $\tilde \cx$,  $(d/dt) \tilde \bx = A \tilde \bx$, is such that matrix $A$ is Hurwitz.}
\eeql
We know from \cite{StZh2013} (proof of lemma 7) that function $L^{(a)}(\bx)$ serves as a Lyapunov function for fluid limit $\bx(\cdot)$ trajectories. 
Namely, for any $\bx(t) \in \cx$, 
$$
\frac{d}{dt} L^{(a)}(\bx(t)) = \nabla L^{(a)}(\bx(t)) \cdot \bx'(t) < 0,
$$
unless $\bx(t)=\bx^{*,a}$. Note that function $L^{(a)}(\bx)$ is twice continuously differentiable (and in fact infinitely continuously differentiable). Therefore, on $\cx$ in the vicinity of $\bx^{*,a}$, function $L^{(a)}(\bx)$ has the following quadratic approximation:
$$
\Psi(\tilde \bx) \doteq \frac{1}{2} \tilde \bx^T \nabla^2 L^{(a)}(\bx^{*,a}) \tilde \bx = \lim_{\delta\downarrow 0} \frac{1}{\delta^2} L^{(a)}(\bx^{*,a}+ \delta \tilde \bx),  ~~\tilde \bx \in \tilde \cx.
$$
To prove \eqn{eq-lfl-ode555}
it suffices to show that $\frac{d}{dt} \Psi(\tilde \bx(t))<0$ for any non-zero $\tilde \bx(t) \in \tilde \cx$. We have
$$
\frac{d}{dt} \Psi(\tilde \bx(t)) = \nabla \Psi(\tilde \bx(t)) \cdot \tilde \bx'(t)
= \nabla [\tilde \bx^T(t) \frac{1}{2} \nabla^2 L^{(a)}(\bx^{*,a}) \tilde \bx(t)] \cdot \tilde \bx'(t)
= \nabla^2 L^{(a)}(\bx^{*,a}) \tilde \bx(t) \cdot \tilde \bx'(t)=
$$
$$
\lim_{\delta\downarrow 0} \frac{1}{\delta} \nabla L^{(a)}(\bx^{*,a}+\delta \tilde \bx(t)) ~ 
\lim_{\delta\downarrow 0} \frac{1}{\delta} \frac{d}{dt} x \vert_{\bx=\bx^*+\delta \tilde \bx(t)} = 
$$
$$
\lim_{\delta\downarrow 0} \frac{1}{\delta^2} \nabla L^{(a)}(\bx^{*,a}+\delta \tilde \bx(t)) ~ 
 \frac{d}{dt} x \vert_{\bx=\bx^{*,a}+\delta \tilde \bx(t)} = 
\lim_{\delta\downarrow 0} \frac{1}{\delta^2} \frac{d}{dt} L^{(a)}(\bx) \vert_{\bx=\bx^*+\delta \tilde \bx(t)}.
$$
Using expression for $\frac{d}{dt} L^{(a)}(\bx(t))$ (see (35) in section 4 of \cite{StZh2013}), we easily 
see that the RHS of the last display is strictly negative, unless $\tilde \bx(t)=0$. 
\end{proof}

\subsection{Completion of the proof of Theorem~\ref{th-grand-fluid} -- verification of \eqn{eq-reduction-az}}

We see from Lemma~\ref{lem-lfl} that there exist constants $C>0$ and $T\ge 1$ such that any local fluid limit 
with $\tilde \bx(0) \in \tilde \cx$
is such that
\beql{eq-lfl-decreases}
\mbox{$\| \tilde \bx(t)\| \le C \| \tilde \bx(0)\|$ for all $t$, and $\| \tilde \bx(t)\| \le \| \tilde \bx(0)\|/2$ for all $t \ge T$.}
\eeql

 Fix any $\veps \in (0,1/2)$ and function $\beta(r)=r^\nu$ 
 with $\nu \in (0,\veps)$. Fix $T\ge 1$ so that \eqn{eq-lfl-decreases} holds for any LFL with $\tilde x(0) \in \tilde \cx$.

For each $r$ consider the process is stationary regime.
By Proposition~\ref{th-grand-fluid-convergence}(i),
\beql{eq-steady-state-conv}
\pr \{ \|\bX^r(0) - r \bx^{*,a}\| \le g(r)\} \to 1,
\eeql
for some function $g(r) = o(r)$, for which WLOG we assume $r^{1/2+\veps} = o(g(r))$.
From \eqn{eq-reduction-zp111}, for any $\veps>0$ and any $\nu>0$,
\beql{eq-mminf}
\pr \{\sup_{0 \le t \le 2T r^\nu} |Y^r(t) - r \rho_i | \le r^{1/2+\veps/2}\} \to 1, ~~\forall i.
\eeql

To prove \eqn{eq-reduction-az}, 
it suffices to show that from any 
subsequence of $r$ we can find a further subsequence, along which 
\beql{eq-cond777}
\|\bX^r(t) - r \bx^{*,a} \| \le r^{1/2+\veps}, \forall t \in [T r^\nu, 2T r^\nu], ~\mbox{w.p1 for all large $r$.}
\eeql
Consider any fixed subsequence.
First, we can and do choose a subsequence along which, w.p.1,
the events in \eqn{eq-steady-state-conv} and \eqn{eq-mminf}
hold for all large $r$.

Next, fix any $\nu' \in (\nu,\veps)$. By Proposition~\ref{thm:strong approximation-111},
we can and do choose a further
subsequence, with $r$ increasing sufficiently fast, so that, 
\beql{eq-strong-approx}
\mbox{w.p.1, \eqn{eq-in-prop8} holds with $\nu$ replaced by $\nu'$.}
\eeql

We consider the process in the interval $[0, 2 T r^\nu]$,
subdivided into $2 r^\nu$ subintervals, each being $T$-long.
(To be precise, we need to consider an integer number, say $\lfloor 2 r^\nu \rfloor$,
of subintervals. This does not cause any difficulties besides making notation
cumbersome.)
In each of the subintervals $[(j-1)T,jT]$, $j=1,2,\ldots, 2r^\nu$, 
we consider the process with the time origin reset to $(j-1)T$ and the
corresponding initial state $\bX^r((j-1)T)$; and if $\|\bX^r((j-1)T) - r \bx^{*,a} \| \le g(r)$,
then we set $h(r)=\|\bX^r((j-1)T) - r \bx^{*,a} \| \vee r^{1/2+\veps}$.
(If $\|\bX^r((j-1)T) - r \bx^{*,a} \| > g(r)$ we set $h(r)=g(r)$ for completeness.)
 We consider the corresponding local fluid scaled processes $\tilde \bx^r(\cdot)$,
with their corresponding $h(r)$,
on each of the subintervals.

It follows from \eqn{eq-strong-approx} that, w.p.1, for all large $r$, condition \eqn{eq-fslln-lfl} holds simultaneously
for all subintervals. We now claim that, w.p.1, for all large $r$, the following property 
holds {\em simultaneously} for all intervals $[(j-1)T,jT]$, $j=1,2,\ldots, 2 r^\nu$. {\em If
$\|\bX^r(t)- r \bx^{*,a}\| \le C g(r)$ for all $t \le (j-1)T$, then: \\
if $\|\bX^r((j-1)T) - r \bx^{*,a} \| \in [r^{1/2+\veps}, g(r)]$ then 
$\|\bX^r(jT) - r \bx^{*,a} \| \le (1/2) \|\bX^r((j-1)T) - r \bx^{*,a} \|$ and $\|\bX^r(t) - r \bx^{*,a} \| \le C g(r)$ for $t \in [(j-1)T,jT]$;\\
if $\|\bX^r((j-1)T) - r \bx^{*,a} \| < r^{1/2+\veps}$ then $\|\bX^r((j-1)T) - r \bx^{*,a} \|  \le C r^{1/2+\veps}$ for $t \in [(j-1)T,jT]$.} \\
Indeed, if not, for arbitrarily large $r$ there would exist an $j'=j'(r)$, which is the smallest $j$ for which the above property fails; then
we would be able to find a subsequence of $r$ along which there is a convergence to an LFL on interval $[0,T]$, this LFL violating the 
properties established in Lemma~\ref{lem-lfl}. The above property implies that, w.p.1 for all large $r$, 
$\|\bX^r(t) - r \bx^{*,a} \| \le C g(r)$ for all $t \le 2T r^\nu$ and  $\|\bX^r(jT) - r \bx^{*,a} \| \le r^{1/2+\veps}$ for at least one $j \le r^\nu$
(because $(1/2)^{r^\nu} < r^{1/2+\veps}/r$).
We conclude that (w.p.1 for all large $r$) condition $\|\bX^r(jT) - r \bx^{*,a} \| \le C r^{1/2+\veps}$ 
holds for all $j \ge r^\nu$. Finally, this implies that (w.p.1 for all large $r$) condition $\|\bX^r(t) - r \bx^{*,a} \| \le C^2 r^{1/2+\veps}$ 
holds for all $t \in [T r^\nu, 2T r^\nu]$, which proves \eqn{eq-cond777}.
(Factor $C^2$ does not matter, because $\veps$ can be arbitrarily small.)
This completes the proof of \eqn{eq-reduction-az}, and of Theorem~\ref{th-grand-fluid}.
$\Box$

\section{Proof of Theorem~\ref{th-grand-zp-ff}}
\label{sec-zp-ff-proof}

\subsection{Initial steps and general proof structure}

Fix any $\delta>0$. We need to prove that
\beql{eq-upper-proof-zp}
\pr\{(U^r(\infty)-r q^{*})/r \le \delta \} \to 1.
\eeql
To prove \eqn{eq-upper-proof-zp} we will apply Theorem~\ref{th-reduction-uni}. Namely, we will 
show the existence of parameters $\nu>0$ and $\veps>0$ such that conditions of Theorem~\ref{th-reduction-uni}
hold with the following choices:
 $\beta(r) = r^\nu$; $\alpha(r)=C_g r^{\kappa(p-1)}$, 
 so that \eqn{eq-reduction-uni000} will be true; 
$\hat q = q^{*}$; $\xi(r) = (\delta/2) r$; as condition \eqn{eq-reduction-uni111} we will have \eqn{eq-reduction-zp111} - it holds for any $\veps>0$.

Let us prove condition \eqn{eq-reduction-uni222}. As long as
$\max_i \left| y_i^r(t) - \rho_i \right|
\le r^{-1/2+\veps}$ holds, we have $x^r_{(i)}(t) \ge C_g r^{p-1}$, and then,
for any occupied server
the instantaneous rate at which a new arrival into this server
occurs is upper bounded by $[\sum_i \lambda_i]r / [C_g r^p]$. The instantaneous rate 
of a customer departure from an occupied server is lower bounded by $\min_i \mu_i$. We can conclude that,
for any occupied server with $m \le \kappa$ customers, at any time, the probability that the next $m$ arrival/departure events will be all departures
is at least $C_g r^{\kappa(p-1)}$. 
From here we see that, if we fix any $\tau>0$,
then for any server at any time, the probability that the server will become empty within time $\tau$
(or condition 
$\max_i \left| y^r_i(t) - \rho_i \right|
\le r^{-1/2+\veps}$ will ``break'')
is at least $C r^{\kappa(p-1)}$. It remains to set $\alpha(r) = C r^{\kappa(p-1)} /\tau$ to complete the proof of \eqn{eq-reduction-uni222}. 

Therefore, the proof of Theorem~\ref{th-grand-zp-ff} reduces to verifying condition \eqn{eq-reduction-uni}, 
which takes form
\beql{eq-reduction-zp}
\pr\{ q^r(t) \le q^*+\delta, ~~\forall t \in [0,r^\nu]\} \to 1,
\eeql
under appropriate choice of parameters $\nu>0$ and $\veps>0$, to be specified later in the proof.

\subsection{Completion of the proof of Theorem~\ref{th-grand-zp-ff} -- verification of \eqn{eq-reduction-zp}}

Condition \eqn{eq-reduction-zp} is proved by slightly extending the proof of theorem 1 in section 5 
in \cite{StZh2015}. Initial part of this proof repeats verbatim sections 5.1-5.3 in \cite{StZh2015},
so we do not reproduce it here -- the reader is referred to \cite{StZh2015}. 
(That part: specifies the Lyapunov function used in the proof, namely $L(\bx) = L^{(r^{p-1})}(\bx)$,
and gives its relevant properties; gives certain conditions that hold in steady-state with high probability;
defines an ``artificial'' version of the process, for which the above conditions are ``enforced'' at all times;
derives the Lyapunov function drift estimates for the artificial process. 
The realizations of the artificial and original process coincide
as long as the latter happens to satisfy the above conditions.) 
The remaining part of the proof, which is a modification of section 5.4 in \cite{StZh2015},
we give now in detail, to avoid any confusion. 
We note that in \cite{StZh2015}
$L^*$ is what in this paper is denoted by $q^*$; so, notations $L^*$ and $q^*$ are used interchangeably.
Also, as in the proof in sections 5 in \cite{StZh2015}, superscript $r$ in a process notation is dropped to simplify notation;
in particular, we write $\sum_{\bk \in \ck} x_{\bk}(t) \equiv q(t)$ instead of $\sum_{\bk \in \ck} x_{\bk}^r(t) \equiv q^r(t)$.

Let the constants 
$\veps>0$, $\eta > 0$ and $c>0$ be those chosen  in section 5.3 
in \cite{StZh2015}. 
More specifically,  
$\veps > 0$ satisfies conditions (59) and (60) in \cite{StZh2015},  
$\eta \in (0, 1/4)$, 
and $c>0$ is such that lemma 6  
in \cite{StZh2015} holds. We choose $\nu=2(1-s)+\veps$.

Fix $C' >0$. (The exact choice of $C'$ will be specified later.) 
Consider the stationary version of the original process on the interval $[0,2 C' r^{-2s+2+\veps}] = [0,2 C' r^{\nu}]$,
subdivided into $2 C' r^{-3s+3+\veps}$ consecutive $r^{s-1}$-long intervals, which will be called {\em subintervals}.
The proof of \eqn{eq-reduction-zp} will be completed if we prove the following 

{\em Assertion.} For any fixed $\gamma>0$ and any fixed subsequence of $r$, there exists a further subsequence of $r$,
along which, w.p.1, for all sufficiently large $r$ the following properties hold for the original process:\\
(a) properties (41)-(43) in \cite{StZh2015}  
hold at all times within all $ 2 C' r^{-3s+3+\veps}$ subintervals, i.e. for all $t \in [0,2C' r^\nu]$;\\
(b) property $\sum_{\bk \in \ck} x_{\bk}(t) - L^* < \gamma$ holds at all times within all subintervals, starting subinterval $C' r^{-3s+3+\veps}+1$, 
i.e. for all $t \in [C' r^\nu,2C' r^\nu]$.

To prove the Assertion, fix any $\gamma>0$ and any subsequence of $r$. 
In addition to the original process, consider the artificial process, coupled to it as follows.
The initial state of the artificial process is equal (w.p.1) to that of the original one.
If the initial state (of both processes) satisfies (41)-(43) in \cite{StZh2015}, 
then
the artificial process evolves as it is defined, and it is coupled to be equal to the original process until the first time 
when (41)-(43) in \cite{StZh2015}   
is violated (for the original process).
By convention, if the initial state (of both processes) violates (41)-(43) in \cite{StZh2015},  
then
the artificial process is ``frozen'', i.e. remains equal to the initial state at all times.

Let us focus on the artificial process, and
apply the estimate (61) in \cite{StZh2015}  
to each subinterval. 
(When (61) in \cite{StZh2015} is applied to a given subinterval, the time is shifted so that $t=0$ is the beginning of that subinterval.) Then, by (61) in \cite{StZh2015} and a simple union bound, 
we see that the probability that the event in the brackets in the LHS of (61) in \cite{StZh2015} holds for at least one
of the subintervals is upper bounded by
$$
C_g  2 C' r^{-3s+3+\veps} r^{4-5s+2\veps}/\eta^2 = C_g  2 C' r^{7-8s+3\veps}/\eta^2.
$$
Since $\veps$ satisfies (60) in \cite{StZh2015},  
$C_g  2 C' r^{7-8s+3\veps}/\eta^2 \to 0$ as $r \to \infty$. 
Consider a further subsequence of $r$, increasing fast enough, e.g. $r=r(n) \ge e^n$,
so that the sum of these probabilities is finite.
 Then, for the artificial process,
by Borel-Cantelli lemma, 
w.p.1, for all large $r$, the condition
\begin{equation}
\label{eq-F-AL-diff}
\max_{0\le t \le T} \left|F(t) - \int_0^t \Xi(\bx(t)) dt\right| < 2 \eta r^{3s-3-\veps}
\end{equation}
holds simultaneously for all $2 C' r^{-3s+3+\veps}$ subintervals; 
furthermore, we have (62)-(63) in \cite{StZh2015}  
for all these subintervals simultaneously.

By lemmas 5 and 6 in \cite{StZh2015}  
and Borel-Cantelli lemma, we can choose a further subsequence of $r$, along which
the original process is such that w.p.1, for all large $r$, conditions (41)-(43) in \cite{StZh2015}  
hold on all $2C' r^{-3s+3+\veps}$ subintervals, and therefore the 
artificial process and the original process coincide. This proves Assertion (a).
Furthermore, along the last chosen subsequence, 
w.p.1, for all large $r$,
$F(t)$ (which we defined for the artificial process) is
equal to the increment of $L$, $F(t) = L(\bx(t)) -L(\bx(0))$, for the original process,
for all subintervals simultaneously, and we also have
(62)-(63) in \cite{StZh2015}  
for all subintervals simultaneously. 

Then, w.p.1, for all large $r$, the following occurs for the original process.
If at the beginning of a subinterval, $\Delta L \equiv L - L^* \ge \gamma$,
then either condition $\Delta L \le \gamma$ is ``hit'' within the subinterval, or
at the end of the subinterval $\Delta L$ is smaller by at least 
$r^{3s-3-\veps}/2 > 0$.
This follows from (63) in \cite{StZh2015}, \eqn{eq-F-AL-diff},   
condition $\eta < 1/4$,
and the fact that (by Lemma 4 in \cite{StZh2015}) 
$\Delta L \ge \gamma$ implies that
$|\chi_{\bk,\bk',i}| \ge \delta_1 >0$
for some $\delta_1=\delta_1(\gamma)$ that (just like $\gamma$) 
 does not depend on $r$.
In addition, 
if at the beginning of a subinterval or any other point in it $\Delta L \le \gamma$, then in this entire subinterval $L(t) - L(0) \le \gamma/2$.

Note that w.p.1, for all large $r$, at the beginning of the first subinterval,
$L(\bx(0)) - L^* \le C''$ for some constant $C''>0$ independent of $r$. (We know from \cite{StZh2015} that function $L(\bx) = L^{(r^{p-1})}(\bx)$,
and it converges to $\sum_{\bk\in\ck} x_{\bk}$ uniformly on compact sets.) 
We now specify the choice of $C'$: it is any constant satisfying $C' > \max\{2 C'',1\}$.
Given this choice, we see that (w.p.1, for all large $r$) condition $\Delta L \le \gamma$ is 
in fact ``hit'' within one of the first $C' r^{-3s+3+\veps}$ subintervals; this in turn implies that 
 $\Delta L < 2\gamma$ must hold at the end of subinterval $C' r^{-3s+3+\veps}$, i.e. at time $t=C' r^\nu$.
 Recall again (\cite{StZh2015}) that function $L(\bx) = L^{(r^{p-1})}(\bx)$ converges to $\sum_{\bk\in\ck} x_{\bk}$ uniformly on compact sets.
 We finally obtain that, w.p.1, for all large $r$, 
 $\sum_{\bk \in \ck} x_{\bk}(t) - L^* < 3\gamma$ within all subintervals, starting subinterval $C' r^{-3s+3+\veps}+1$, i.e. at all times $t \in [C' r^\nu,2C' r^\nu]$. Rechoosing $\gamma$ completes the proof of Assertion (b),
 and then of \eqn{eq-reduction-zp}. 
$\Box$

\section{Discussion}
\label{sec-discussion}

We prove that both algorithms GRAND($aZ$)-FF and GRAND($Z^p$)-FF are asymptotically optimal in that, in steady-state, $U/r \Rightarrow q^*$. For GRAND($Z^p$)-FF Theorem~\ref{th-grand-zp-ff} shows this directly. For GRAND($aZ$)-FF Theorem~\ref{th-grand-fluid}
implies the existence of a dependence of parameter $a$ on $r$, such that $U/r \Rightarrow q^*$. (It is natural to expect that 
the dependence $a=r^{p-1}$, where $p$ is same as used in GRAND($Z^p$)-FF, does that. But, it is not formally addressed in this paper.)
Therefore, under both algorithms
a bound $U- q^* r \sim o(r)$ is achieved. The questions of more precise characterization of the $o(r)$-term under GRAND($Z^p$)-FF
and GRAND($aZ$)-FF remain open, 
and may be a subject of further research.

Comparing Theorems~\ref{th-grand-zp-ff} and \ref{th-grand-fluid}, it is worth noting the following.
GRAND($Z^p$)-FF algorithm is, in essence (but not exactly), the GRAND($aZ$)-FF with parameter $a$ depending on $r$
in the specific way, $a=r^{p-1}$. The asymptotic regime of Theorem~\ref{th-grand-fluid}, with parameter $a$ kept constant
as $r$ increases, is of independent interest.
The bound $|U - q^{*,a}| \le O(r^{1/2+\veps})$ for any $\veps>0$ in Theorem~\ref{th-grand-fluid} 
is quite tight, given that in cannot be better than $O((2 r \log \log r)^{1/2})$ (by the previous results for M/M/$\infty$ with ranked servers).

It is conjectured in \cite{StZh2013} (see conjecture 10 there) that $Q/r \Rightarrow q^*$ under GRAND($0$) algorithm, which is the instance of GRAND,
such that an arriving customer is placed uniformly at random into one of the occupied servers that can still fit it, 
and into an empty server only when none of the occupied servers are available. 
The conjecture is supported by the intuition provided by the analysis of GRAND($aZ$) and GRAND($Z^p$). Specifically, 
the optimality proofs in \cite{StZh2013} and \cite{StZh2015} show that GRAND($X_{\bZero}$) algorithm dynamics
drives the system state to the vicinity of optimal state, with the time scale of this dynamics being larger when $X_{\bZero}$ is smaller.
Those proofs suggest that the dynamics under GRAND($0$) should be similar, albeit on a different -- larger -- time scale.
(The conjecture is also supported by simulation results in \cite{StZh2013}.)
Proving the GRAND($0$) asymptotic optimality conjecture remains an interesting subject for future work. 
And if/when this conjecture is proved, there is a hope that the
approach developed in this paper can be instrumental in proving that $U/r \Rightarrow q^*$ under GRAND($0$) combined with FF.

Another potential subject  of future research is analyzing a ``Greedy-FF'' algorithm, which places an arriving customer into the left-most server available to it. Asymptotic optimality of this algorithm, in the sense of $U/r \Rightarrow q^*$, for the general model in this paper,
 would be far less intuitive than that of GRAND($0$) combined with FF, because the system dynamics under Greedy-FF is substantially different. 
 It is far less intuitive that Greedy-FF will asymptotically minimize the number of
 occupied servers, $Q/r \Rightarrow q^*$, in the first place. 
 
 Finally, although it is not the focus of this paper, we now briefly discuss a system with finite number $N$ of servers, with blocking. 
 Suppose $N = q^* r + g(r)$, for some positive subcriticality margin $g(r)$. The basic question is: under a given placement/blocking 
 algorithm, does the blocking probability vanish as $r \to \infty$? An algorithm design options for this problem depend on the 
 amount of system information available. Consider some of the possible settings.
 
 \begin{itemize}
 \item Suppose {\em all system parameters are known in advance}, including arrival rates $\lambda_i r$, service rates $\mu_i$.
 In this case we can a priori find the optimal $q^*$ and a corresponding optimal $\bx^*$, that is $q^* = \sum_{\bk \in \ck} x^*_{\bk}$.
 Then, we can ``preallocate'' servers to fixed packing configurations $\bk$ in proportion to $x^*_{\bk}$, so that the total capacity 
 available to type $i$ customers is $N_i = [\sum_{\bk \in \ck} k_i x^*_{\bk}] (r + g(r))= \rho_i (r +g(r))$. An arriving customer of type $i$ is 
 blocked if and only if type $i$ customers already occupy the entire capacity $N_i$, dedicated to it. Therefore, from the point of view of type $i$ customers, the system will operate as an independent 
 $M/M/N_i$ ``Erlang-B'' system with blocking. In particular, the blocking probability will vanish as long
 as $r^{1/2} = o(g(r))$. A downside of this approach is that the system parameters are not necessarily known in advance and/or may change with time.
 
 \item Suppose {\em the arrival rates $\lambda_i r$ are not known, but the service rates $\mu_i$ are.} Suppose further that
 we have an algorithm ALG for the system with infinite number of ranked servers, for which $[U-(N-1)] \vee 0 = [U-(q^* r + g(r)-1)] \vee 0  \Rightarrow 0$. 
 (For example, as we prove in this paper, this is the case for GRAND($Z^p$)-FF with $g(r)=O(r)$.) Then we can construct the algorithm,
 let us label it ALG-BLOCK, for the system with $N$ servers, under which the blocking probability vanishes. Specifically, ALG-BLOCK
 will emulate ALG in that it will ``pretend'' that in addition to actual servers, ranked $1, \ldots, N$, there is the infinite number of ``imaginary'' servers, ranked $N+1, N+2, \ldots$. ALG-BLOCK will work exactly like ALG, except when an arriving customer is placed into an imaginary server,
 this actual customer is blocked, but its ``imaginary version'' is ``served'' by the imaginary server. Imaginary customers ``complete service'' after a random service time generated by the algorithm. (Recall that the service time distributions are known.) 
Because the probability of not having an empty actual server vanishes, so does the blocking probability.

\item Suppose {\em neither the arrival rates $\lambda_i r$ nor the service rates $\mu_i$ are known.} In this scenario, 
a natural algorithm to consider is one of the algorithms in \cite{St2015_grand-het},
 simply assigning an arriving customer to any server currently available to it, uniformly at random, and blocks the customer if none is available.
(The algorithm can be viewed as another version of GRAND.)
The results of \cite{St2015_grand-het}, mentioned in Section~\ref{sec-rel-prev-work},
 suggest that margin $g(r)=O(r)$ is sufficient for blocking probability to vanish, but there is no proof.
  
 \end{itemize}
 
  {\em Acknowledgement.} I would like to thank the associate editor and two anonymous referees for valuable comments, which helped to improve the exposition in the paper.

\bibliographystyle{acmtrans-ims}


\end{document}